\documentclass[a4paper,draft]{amsart}
\usepackage{amssymb}

\usepackage{mathrsfs}  

\newtheorem{thm}{Theorem}[section]
\theoremstyle{plain}
\newtheorem*{thm*}{Theorem}
\newtheorem{prop}[thm]{Proposition}
\newtheorem*{prop*}{Proposition}
\newtheorem{lem}[thm]{Lemma}

\theoremstyle{definition}
\newtheorem{dfn}[thm]{Definition}

\newtheorem*{ex*}{Example}
\theoremstyle{definition}

\DeclareMathOperator{\iind}{Ind}

\DeclareMathOperator{\Span}{span}
\DeclareMathOperator{\supp}{supp}
\DeclareMathOperator{\SG}{S}

\newcommand{\ca}[1]{\mathcal{#1}}
\newcommand{\fH}[1]{\mathcal{H}_{#1}}

\newcommand{\ind}[3]{\iind_{#1}^{#2}{#3}}

\newcommand{\s}[0]{\Sigma}
\newcommand{\gus}[0]{G^{(0)}}
\newcommand{\Gus}[1]{{#1}^{(0)}}
\newcommand{\gudu}[0]{G^{u}_{u}}
\newcommand{\gud}[2]{G^{#1}_{#2}}

\newcommand{\gt}[0]{\SG}
\newcommand{\gbg}{\s \ast \gt}
\newcommand{\bhb}{\gus\ast \fH{}}

\renewcommand{\leq}{\leqslant}

\setlength{\textwidth}{28cc} \setlength{\textheight}{42cc}

\title{THE ORBIT SPACE OF GROUPOIDS WHOSE $C^*$-ALGEBRAS ARE GCR}



\author[DW van Wyk]{\bfseries Daniel W van Wyk} 

\address{  Department of Mathematics and Statistics \\ 
 University of Otago   \\ 
Dunedin, \newline
New Zealand 
}
\email{dwvanwyk@maths.otago.ac.nz}



\begin{document}

\vspace{18mm} \setcounter{page}{1} \thispagestyle{empty}

\begin{abstract}
Let $G$ be second countable locally compact Hausdorff groupoid
with a continuous Haar system. We remove the assumption of amenability in a theorem by 
Clark about GCR groupoid $C^*$-algebras. We show that if the groupoid $C^*$-algebra of $G$ is GCR then the orbits of $G$ are locally closed.
\end{abstract}

\maketitle

\section{INTRODUCTION}
$C^*$-algebras can be divided into classes based on their representation
theory. Two such classes of well-behaved $C^*$-algebras are GCR and CCR $C^*$-algebras. 
Let $\ca{K}(\fH{})$ denote the compact operators on a Hilbert space $\fH{}$.
A $C^*$-algebra $\ca{A}$ is called CCR, if for every irreducible representation
$\pi:\ca{A}\to B(\fH{\pi})$ we have $\pi(\ca{A})=\ca{K}(\fH{\pi})$.
 $\ca{A}$ is called GCR if for every irreducible representation
$\pi$ we have $\pi(\ca{A})\supset\ca{K}(\fH{\pi})$.

We investigate the orbit spaces of groupoids whose $C^*$-algebras are GCR or CCR. The techniques used in the GCR and CCR cases are quite different. Therefore, in this paper, the first of two, we treat the classes of groupoids whose $C^*$-algebras are GCR, or equivalently type I. 
In \cite{Cla07} Clark gives the following characterization for 
groupoids whose $C^*$-algebras are GCR: 
\begin{thm*}[Clark]
Let $G$ be a second-countable locally compact Hausdorff \\ groupoid with a Haar system. 
Suppose that all the stability subgroups of $G$ are amenable. Then $C^*(G)$ is 
GCR if and only if the orbit space is $T_{0}$ and the stability 
subgroups of $G$ are GCR. 
\end{thm*}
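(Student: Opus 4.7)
The plan is to prove the two implications separately using quite different techniques, exploiting throughout Ramsay's characterization that the orbit space $\gus/G$ is $T_0$ if and only if every orbit is locally closed in $\gus$.

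For the direction ``orbit space $T_0$ and stabilizers GCR imply $C^*(G)$ GCR,'' I would argue structurally. Local closedness of orbits yields an ordinal-indexed composition series for $C^*(G)$ whose subquotients are $C^*$-algebras of transitive invariant pieces of $G$. By the Muhly--Renault--Williams equivalence theorem, each such piece is Morita equivalent to $C^*(\SG_x)$ for a representative stability subgroup. Since being GCR is preserved under Morita equivalence and under extensions, and each $C^*(\SG_x)$ is GCR by hypothesis, one assembles $C^*(G)$ as GCR. Notably, amenability plays no role in this direction.

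The converse is where amenability is essential. Under amenability, a groupoid version of the Effros--Hahn conjecture identifies $\Prim C^*(G)$, up to kernels, with pairs consisting of an orbit together with an irreducible representation of the associated stability subgroup. Since $C^*(G)$ being GCR means $\Prim C^*(G)$ is $T_0$, this identification forces the orbit space to be $T_0$. For the stabilizers, a slice (or Rieffel-induction) argument realizes each $C^*(\SG_x)$ as Morita equivalent to a subquotient of $C^*(G)$, and GCR descends to such pieces.

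The main obstacle I anticipate is setting up the Effros--Hahn correspondence between $\Prim C^*(G)$ and data induced from stability subgroups, and then checking that the hull--kernel topology on $\Prim C^*(G)$ is compatible with the quotient topology on $\gus/G$. This is precisely the step that genuinely needs amenability, since without it the induction procedure no longer exhausts the irreducible representations of $C^*(G)$, and the connection between primitive ideals and the orbit space breaks down.
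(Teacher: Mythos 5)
First, a point of order: the paper does not prove the quoted theorem at all --- it is cited verbatim from Clark \cite{Cla07}, and the paper's sole contribution to it is a new, amenability-free proof of the single sub-implication ``$C^*(G)$ GCR $\Rightarrow$ $\gus/G$ is $T_0$'' (Theorem \ref{mainGCRthm}), obtained by a route entirely different from yours: a direct integral $\int_{\gus}^{\oplus} l^u\, d\mu(u)$ of representations induced from the trivial characters of the stabilizers, a groupoid version of Effros' lemma showing that such a direct integral over a non-trivially ergodic measure cannot be type I, and Ramsay's Mackey--Glimm dichotomy producing such a measure when $\gus/G$ fails to be $T_0$. Your outline of the forward implication (ordinal composition series from locally closed orbits, Morita equivalence of transitive pieces with stability groups, closure of GCR under extensions and Morita equivalence) is essentially Clark's original strategy and is sound in spirit, with the caveat that a single layer of the composition series is in general not one orbit but an invariant set whose orbit space is Hausdorff, so a further disintegration over that orbit space is needed before Morita equivalence with a single $C^*(\SG_x)$ can be invoked.

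The genuine gap is in your converse. You assert that ``$C^*(G)$ being GCR means $\Prim C^*(G)$ is $T_0$''; but $\Prim \mathcal{A}$ is $T_0$ for \emph{every} $C^*$-algebra $\mathcal{A}$ in the hull--kernel topology, so as written your argument never uses the GCR hypothesis and would ``prove'' that every orbit space is $T_0$, which is false. What GCR actually buys (via Glimm's theorem, using separability) is that the canonical surjection $C^*(G)^{\wedge} \to \Prim C^*(G)$ is injective, i.e.\ inequivalent irreducible representations have distinct kernels. That is where the hypothesis must enter: combined with the injectivity of $[u]\mapsto [l^u]$ on orbits (Clark's Lemma 5.5), it makes the map from $\gus/G$ into $\Prim C^*(G)$ injective, and then \emph{continuity} of that map --- which is exactly the ``compatibility of the hull--kernel topology'' step you flag as the obstacle, and is where Clark uses amenability of the stabilizers --- transfers $T_0$-ness back to $\gus/G$. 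The Effros--Hahn machinery you invoke is much heavier than this direction requires, and even granting it you would still have to supply the same injectivity and continuity, so it does not circumvent the hard step. Finally, the entire point of the present paper is that this direction needs no amenability at all: the ergodic-measure/direct-integral argument above replaces it.
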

 
Clark's theorem generalizes a theorem for $C^*$-algebras of transformation groups by Gootman, \cite{Goo73}. However,
Gootman does not assume that the stabilizers are amenable. Due to the lack of 
an amenability assumption in Gootman's GCR characterization, Clark 
conjectures that the amenability hypothesis in the groupoid characterization 
is unnecessary \cite{Cla07}. We provide an an affirmative answer to Clark's conjecture.

Clark defines a map from the orbit space of the groupoid into the spectrum
$C^*(G)^{\wedge}$ of the groupoid $C^*$-algebras $C^*(G)$, and requires amenable stabilizers to show that this 
map is continuous. Clark's GCR proof only uses the continuity of this map, and thus amenability,  to prove that ``if $C^*(G)$ is GCR then the orbit space is $T_{0}$."  Therefore, to remove 
the amenability assumption from Clark's GCR characterization we only need to show that if $C^*(G)$ is GCR, then the orbit space is $T_{0}$. 

Clark uses a different approach to Gootman. We  show that Gootman's approach can be adapted to the groupoid setting. In a second countable locally compact Huasdorff groupoid the stabilizers always vary measurably, even when the stabilizers don't vary continuously, \cite[Lemma 1.6]{Ren91}. With an appropriate measure we construct a direct integral representation of $C^*(G)$ from representations that are induced from stabilizers. We prove a groupoid version of Lemma 4.2 in \cite{Eff65} by Effros, that imposes a condition on the measure which ensures that the direct integral is a type I representation. Then we prove the contrapositive: if the orbit space is not $T_0$, then Ramsay's Mackey-Glimm dichotomy for groupoids (\cite[Theorem 2.1]{Ram90}) ensures we have a non-trivial ergodic measure on the unit space. Then by our groupoid version of Effros' lemma, if the measure is non-trivially ergodic, then the direct integral representation cannot be type I. Since a $C^*$-algebra is GCR if and only if it is type I (\cite{Kap51, Gli61, Sak66}), the result follows.


\section{PRELIMINARIES} \label{ch:Prelim}
Throughout $G$ is a second-countable locally compact Hausdorff groupoid, with 
a continuous Haar system $\{\lambda^u\}_{u\in\gus}$ (see \cite{Ren80} for these definitions). Let $r$ and $s$ denote the range and source maps, respectively, from $G$ onto the unit space $\gus$. For $u\in\gus$ we let $G^u:=r^{-1}(u)$, $G_u:=s^{-1}(u)$ and the stabilizer or stability subgroup at $u$ is $\gudu:=r^{-1}(u)\cap s^{-1}(u)$. 
For $x\in G$, the map $R(x):=(r(x),s(x))$ defines an 
equivalence relation $\sim$ on $\gus$. For $u\in\gus$ we let 
$[u]:=\{v\in\gus :u\sim v\}$ denote the orbit of $u$. The orbit space $\gus / G$
is the quotient space for the equivalence relation $\sim$ on $\gus$. 

Throughout $C_{c}(X)$ denotes the continuous compactly 
supported functions from the topological space $X$ into $\mathbb{C}$.
	If $f,g\in C_{c}(G)$, then
	$$f\ast g (x):=\int_{G}{f(y)g(y^{-1}x) \: d\lambda^{r(x)}(y)}$$
	and
	$$ f^{*}(x):= \overline{f(x^{-1})} ,$$
	define convolution and involution operations on $C_{c}(G)$, respectively.  
	With these operations $C_{c}(G)$ is a *-algebra. Let $\fH{}$ be a separable Hilbert space and $B(\fH{})$ the bounded linear operators on $\fH{}$.  A \textit{representation} of $C_{c}(G)$ is a *-homomorphism $\pi:C_{c}(G)\to B(\fH{})$	such that $||\pi(f)||\leq ||f||_I$, where $||f||_I$ is the I-norm on $C_{c}(G)$ (see \cite{Ren80} for the I-norm). Then $C^{*}(G)$ is the completion of $C_{c}(G)$ in the norm
$$||f||:=\{\sup||\pi(f)||:\pi \text{ is a representation of }C_{c}(G)\}.$$	
We assume all representations are non-degenerate.

We use representations of $C^*(G)$ which are induced from the trivial representations of stabilizers. 
Fix $u\in \gus$, $f\in C_{c}(G)$ and $\phi,\psi\in C_{c}(G_u)$. The trivial representation $1_u$ of $\gudu$ is given by $1_u(t)=1$ for every $t\in \gudu$. 
The integrated form of $1_u$ is the representation $\pi_{1_u}:C_{c}(\gudu)\to \mathbb{C}$ given by 
\begin{equation*}
\pi_{1_u}(a):= \int_{\gudu}{a(t)\:\Delta_{u}(t)^{-1/2} \: d\beta^u(t)}, 
\end{equation*}
which extends to give a representation of $C^*(\gudu)$.  Note: the modular function in the integrand above is due to the fact that we view the stabilizers as \textit{subgroupoids}.
 Clark (\cite{Cla07}) and Ionescu and Williams (\cite{IonWil09}) show that we can induce $\pi_{1_u}$ to get a representation 
$$\ind{\gudu}{G}{\pi_{1_u}}:C^*(G)\to B(\fH{u}). $$
We briefly describe how the induced representation is constructed and introduce some notation.
Clark and Ionescu and Williams use induction via Hilbert modules. They show that 
$C_c(G_u)$ is a right $C^*(\gudu)$-pre-Hilbert module,  where the right inner product on $C_c(G_u)$ is given by  
\begin{eqnarray}\label{eq:ModuleInnerProd} 	
 \langle \psi, \phi \rangle_{C_{c}(\gudu)}(t)= \psi^{*}\ast\phi (t).
\end{eqnarray}
The completion of $C_{c}(G_u)$ in the inner product (\ref{eq:ModuleInnerProd}) gives a right $C^*(\gudu)$-Hilbert module $\mathcal{X}$.
Furthermore, for all $\gamma \in G_u$,
\begin{eqnarray*}\label{eq:AdjointActionOnModule}
f\cdot \phi (\gamma)&:=&\int_{G}{f(\eta)\phi(\eta^{-1}\gamma)\: d\lambda^{r(\gamma)}(\eta)} \\
					&=& f\ast \phi (\gamma)
\end{eqnarray*}
defines an action of $C_c(G)$ on $C_c(G_u)$ as adjointable operators, which extends to an 
action of $C^*(G)$ on $\mathcal{X}$. 
Before the induced representation, we first consider the representation space. 
Define an inner product on $C_c(G_u)$ by 
\begin{eqnarray} 
(\phi\mid \psi)_{u}&=& \pi_{1_u}(\langle \psi,\phi\rangle_{_{C^*(\gudu)}}) \nonumber\\
				&=& \pi_{1_u}(\psi^{*}\ast \phi ) \nonumber \\
				&=&\int_{\gudu}{\psi^{*}*\phi(t)\Delta_{u}(t)^{-\frac{1}{2}}
			\: d\beta^{u}(t)}.  \label{eq:InducedInnerprod}
\end{eqnarray}
Denote the completion of  $C_c(G_u)$ in the inner product in (\ref{eq:InducedInnerprod}) by $\fH{u}$. Then the induced representation  $\ind{\gudu}{G}{\pi_{1_u}}:C_c(G)\to C_c(G_u)$ is defined by
\begin{equation} \label{TrivInducedAction}
\ind{\gudu}{G}{\pi_{1_u}}(f)(\phi)=f\ast \phi.
\end{equation} 
By \cite[Proposition 2.66]{MoritaEq}, $ \ind{\gudu}{G}{\pi_{1_u}}$ extends to 
give a representation of $C^*(G)$ as bounded linear 
operators on the Hilbert space $\fH{u}$.
To simply notation we write
$$l^u:=\ind{\gudu}{G}{\pi_{1_u}}, $$
for any $u\in \gus$. Finally, each representation $l^u, u\in\gus$, is an irreducible representation of $C^*(G)$, \cite{Cla07a, IonWil09}.


\section*{\textbf{GCR GROUPOID $C^*$-ALGEBRAS}} \label{ch:GCR}
Section \ref{measuresongroupoids} addresses the measurability of a map on the units space which is used to construct a direct integral representation. In Section  \ref{directint} we construct a Borel Hilbert bundle and 
a direct integral representation. The direct integral representation acts on the Hilbert space of square integrable sections of the Borel Hilbert bundle. We also prove Proposition \ref{factorrep}, a groupoid version of a result by Effros, giving a condition on the measure used for the direct integral representation to be type I. Section \ref{main} contains our main GCR result, Theorem \ref{mainGCRthm}. 

\section{A BOREL MAP ON THE UNIT SPACE} \label{measuresongroupoids}

For the construction of the direct integral representation, we need Proposition 
\ref{Borelprop} below, which gives the measurability of certain maps on the unit space.
These maps have the form of an integral, where 
the group with respect to which we integrate and its Haar measure depend on 
the particular unit in $\gus$. 
This this dependence is fine, since Renault shows \cite[Lemma 1.5]{Ren91} that 
the stabilizer map $u\mapsto \gudu$ from $\gus$ to 
the space of all closed subgroups with the Fell topology,
always varies measurably. However, the integrand also has
a modular function that depends on the unit in $\gus$. 
We show that despite the modular function 
these maps are still Borel. Specifically we show:

\begin{prop} \label{Borelprop}
		Let $u\in\gus$ and $f\in C_{c}(G)$. There exists a Haar measure $\beta^u$ on $\gudu$ 
		with associated modular function $\Delta_{u}$ such that	the map 
		$$ u\mapsto \int_{\gudu}{f(t)\:\Delta_{u}(t)^{-\frac{1}{2}} \: d\beta^u (t)}$$
		is Borel.
	\end{prop}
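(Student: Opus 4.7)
The plan is to split the argument into three stages: (a) construct a Borel field of left Haar measures $\{\beta^u\}_{u\in\gus}$ on the stabilizers; (b) show that the modular function $(u,t) \mapsto \Delta_u(t)$ is jointly Borel; (c) combine (a) and (b) to conclude that the twisted integral is Borel in $u$. Stage (a) starts from Renault's \cite[Lemma 1.5]{Ren91}, which asserts that $u \mapsto \gudu$ is Borel from $\gus$ into the space of closed subgroups of $G$ with the Fell topology; a measurable selection argument on this Borel bundle of locally compact groups then yields a Borel family $\{\beta^u\}$ for which $u \mapsto \int_{\gudu} g(t)\, d\beta^u(t)$ is Borel for every $g \in C_c(G)$, with the normalization fixed by a single test function $g_0 \in C_c(G)^+$.

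For stage (b), the defining identity of the modular function gives, for $s \in \gudu$ and $g \in C_c(G)$,
$$\int_{\gudu} g(rs)\, d\beta^u(r) = \Delta_u(s)^{-1} \int_{\gudu} g(r)\, d\beta^u(r),$$
so that
$$\Delta_u(s) = \frac{\int_{\gudu} g(r)\, d\beta^u(r)}{\int_{\gudu} g(rs)\, d\beta^u(r)}$$
whenever the denominator is non-zero. Since no single $g$ has non-vanishing integral against every $\beta^u$, I would fix a countable family $\{g_n\}$ dense in $C_c(G)$ and form the Borel partition
$$A_n = \Bigl\{u \in \gus : \int_{\gudu} g_n\, d\beta^u \neq 0 \ \text{and}\ \int_{\gudu} g_k\, d\beta^u = 0\ \text{for}\ k<n\Bigr\}.$$
On each $A_n$ the ratio formula with $g = g_n$ is Borel in $(u,s)$: the numerator is Borel in $u$ by (a), and the denominator is Borel in $(u,s)$ through joint continuity of $(s,r) \mapsto g_n(rs)$. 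Density of $\{g_n\}$ forces $\gus = \bigcup_n A_n$, so $(u,s) \mapsto \Delta_u(s)$ is Borel on the total space $\{(u,s) : s \in \gudu\}$.

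For stage (c), with $(u,t) \mapsto \Delta_u(t)^{-1/2}$ now Borel on the fibred domain, I would approximate $f(t)\Delta_u(t)^{-1/2}$ by Borel simple functions, apply (a) termwise, and pass to the limit by dominated convergence, using compactness of $\supp f \cap \gudu$ and continuity of $\Delta_u^{-1/2}$ there. The main obstacle is stage (b): the $u$-dependence of both $\gudu$ and $\beta^u$ rules out a single uniform test function, so the partition-and-patch procedure must be arranged with care to ensure the ratio formula defines a genuinely Borel function on the total space rather than a collection of fiberwise formulas.
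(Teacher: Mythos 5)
Your stages (b) and (c) are a genuinely different treatment from the paper's: the paper never needs your ratio formula for the modular function, because it works on the bundle $\gbg$ of closed subgroups of the stability subgroupoid $\gt=\bigcup_u\gudu$ over the subgroup space $\s$, where Clark's Lemma 5.3 gives \emph{continuity} of $(H,\gamma)\mapsto\Delta_H(\gamma)$; the integral is then continuous in $H\in\s$ by the defining property of a continuous Haar system, and Borelness in $u$ falls out by composing with Renault's Borel stabilizer map $u\mapsto\gudu$. Your partition-and-patch derivation of joint Borelness of $\Delta$, and the monotone-class/simple-function step in (c), could in principle be made to work, though (c) silently needs the Borel-field property extended from $C_c(G)$ test functions to indicators of Borel sets.

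The genuine gap is stage (a), which is precisely the existential content of the proposition. ``A measurable selection argument on this Borel bundle of locally compact groups'' is not an available off-the-shelf step: general selection theorems do not produce a Borel field of Haar measures, and the paper has to work for this --- it verifies that $\gt$ is locally conditionally compact (Lemma \ref{lcc}), builds the locally compact Hausdorff group bundle $\gbg$, and invokes \cite[Corollary 1.4]{Ren91} to obtain a \emph{continuous} Haar system $\{\beta^H\}_{H\in\s}$, from which $\beta^u:=\beta^{\gudu}$ is checked to be left invariant. Without this (or an equivalent construction) your family $\{\beta^u\}$ does not exist yet, and everything downstream is conditional. Moreover, your proposed normalization of the field by a single $g_0\in C_c(G)^+$ is impossible in general: $\supp g_0\cap\gudu$ is empty for all $u$ outside the compact set $r(\supp g_0)$, so $\int_{\gudu}g_0\,d\beta^u=0$ there and cannot be normalized to $1$ --- the very obstruction you correctly identify in stage (b) already defeats the normalization in stage (a). You would need the same countable partition there, and then still justify why a Borel field exists at all before you can rescale it.
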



For a locally compact Hausdorff space $X$
let $\mathscr{B}(X)$ be the Borel $\sigma$-algebra on $X$.  
A \textit{Borel measure} is a positive Radon measure on $\mathscr{B}(X)$. 

Let 
$$\gt:=\bigcup_{u\in \gus}\gud{u}{u}$$
denote the stability subgroupoid of $G$.
Then the  range and source maps agree on $\gt$ and $\Gus{\gt}=\gus$. Because $\gus$ is 
Hausdorff, $\gt$ is a closed subset of $G$.

Let $\mathscr{C}(\gt)$ be the set of all closed subsets of $\gt$ with the Fell topology.
Then $\mathscr{C}(\gt)$ is a compact Hausdorff space, \cite[Proposition H.3]{CrossedProd}.
Since $\gt$ is second countable, so is $\mathscr{C}(\gt)$.  Let 
$$\Sigma:=\{H\in\mathscr{C}(\gt):H \text{ \textit{is a closed subgroup of} }\gt \}.$$	
Give
$$\gbg:=\{(H,\gamma)\in \s \times \gt: H\in\s, \gamma\in H\}$$
the relative topology inherited from the product topology on 
$\s\times\gt$. Note: $\gbg$ is a group 
bundle groupoid, and its unit space is identified with $\s$. 
We show that $\gbg$ is locally compact Hausdorff.

\begin{lem}
The groupoid $\gbg$ is second-countable, locally compact and Hausdorff.
\end{lem}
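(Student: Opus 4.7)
I would prove all three properties in one stroke by exhibiting $\gbg$ as a closed subspace of $\mathscr{C}(\gt)\times\gt$. The paper already records that $\mathscr{C}(\gt)$ is compact Hausdorff and second countable in the Fell topology; and because $\gus$ is Hausdorff, $\gt$ is closed in the second countable locally compact Hausdorff groupoid $G$, hence itself second countable, locally compact, and Hausdorff. The product therefore has all three properties, and (being second countable) is metrizable, so closedness can be tested sequentially.

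For the main step I would decompose
\[
\gbg=(\s\times\gt)\cap\ca{R},\qquad \ca{R}:=\{(H,\gamma)\in\mathscr{C}(\gt)\times\gt:\gamma\in H\},
\]
and show both factors are closed. For $\ca{R}$: if $(H_n,\gamma_n)\to(H,\gamma)$ with $\gamma_n\in H_n$ but $\gamma\notin H$, local compactness of $\gt$ together with closedness of $H$ gives a compact neighborhood $K$ of $\gamma$ disjoint from $H$; by the defining property of the Fell topology, $H_n\cap K=\emptyset$ eventually, contradicting $\gamma_n\in H_n$ with $\gamma_n\to\gamma\in K$. For $\s$: given $H_n\to H$ with each $H_n$ a closed subgroup of some $\gud{u_n}{u_n}$, pick $h_1,h_2\in H$ and use the Fell topology to produce $h_i^n\in H_n$ with $h_i^n\to h_i$. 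Since each $H_n$ sits in a single stabilizer, $s(h_1^n)=r(h_2^n)=u_n$, so $h_1^n h_2^n\in H_n$ is defined and converges to $h_1 h_2$ by continuity of multiplication; the $\ca{R}$-argument then gives $h_1 h_2\in H$. Continuity of inversion handles inverses, and continuity of $r$ confines $H$ to a single fiber, so $H$ is itself a closed subgroup.

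The step I expect to require the most care is the closedness of $\s$ in $\mathscr{C}(\gt)$, because the paper emphasizes, just before the lemma, that $u\mapsto\gudu$ need not vary continuously. The point is that subgroup axioms are checked fiber-by-fiber, so only continuity of the groupoid operations within each fixed stabilizer matters; the possible discontinuity of the stabilizer map itself plays no role in the argument.
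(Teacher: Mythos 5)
Your overall strategy coincides with the paper's --- realize $\gbg$ as a closed subset of a second-countable locally compact Hausdorff product and argue with the Fell-topology characterization of convergence --- and your treatments of $\ca{R}$ and of the subgroup axioms (products, inverses, confinement of $H$ to a single fibre via continuity of $r$) are sound. However, the step ``$\s$ is closed in $\mathscr{C}(\gt)$'' is false in general, and your own argument shows where it fails: you begin by picking $h_1,h_2\in H$, which silently assumes $H\neq\emptyset$. When $\gus$ is non-compact, a sequence of closed subgroups $H_n\subset G^{u_n}_{u_n}$ whose base points $u_n$ eventually leave every compact set Fell-converges to $\emptyset$, which is not a subgroup; for instance, if $G=\gus=\mathbb{R}$ then $H_n=\{n\}\to\emptyset$ in $\mathscr{C}(\gt)$. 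So the closure of $\s$ is $\s\cup\{\emptyset\}$, which can properly contain $\s$, and the first factor in your decomposition $\gbg=(\s\times\gt)\cap\ca{R}$ need not be closed. What your fibre-by-fibre argument actually proves is that every \emph{nonempty} Fell-limit of closed subgroups is a closed subgroup, i.e.\ that $\s\cup\{\emptyset\}$ is closed.

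The gap is local and easily repaired: in a single sequential argument for closedness of $\gbg$ in $\mathscr{C}(\gt)\times\gt$, your $\ca{R}$-step already yields $\gamma\in H$, so the limit $H$ is automatically nonempty and your subgroup argument then applies verbatim; equivalently, note that $\ca{R}\cap\bigl((\s\cup\{\emptyset\})\times\gt\bigr)=\gbg$, since no pair $(H,\gamma)$ with $\gamma\in H$ can have $H=\emptyset$. For comparison, the paper sidesteps the issue by never claiming $\s$ is closed: it uses the (asserted) compactness of $\s\cup\{\emptyset\}$ to get that $\s$ is locally compact as an open subset of that compact set, and then proves $\gbg$ is closed inside $\s\times\gt$ rather than inside $\mathscr{C}(\gt)\times\gt$. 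A genuine merit of your version is that it supplies a proof of the fact the paper leaves implicit, namely that nonempty Fell-limits of closed subgroups of $\gt$ are again closed subgroups.
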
	
\begin{proof}
We first show that $\s\times\gt$ is second-countable,
locally compact and Hausdorff. Then we show that $\gbg$ is closed in $\s\times\gt$.

As a subspace $\gt$ is automatically second countable and Hausdorff. 
Since $\gt$ is closed in $G$, it locally compact. As a
subspace of $\mathscr{C}(\gt)$, $\s$ 
is second-countable and Hausdorff. 
Since $\s\cup \{\emptyset\}$ is compact in $\mathscr{C}(\gt)$ and $\mathscr{C}(\gt)$ is Hausdorff,
we have that $\s\cup \{\emptyset\}$ is closed in $\mathscr{C}(\gt)$.
Also, $\mathscr{C}(\gt)$ Hausdorff implies 
$\{\emptyset\}$ is closed in $\mathscr{C}(\gt)$.		
Thus, $\s=(\s\cup \{\emptyset\})\backslash \{\emptyset\}$ is open in 
$\mathscr{C}(\gt)$, and hence locally compact.
Since both $\gt$ and $\s$ are second-countable locally compact and Hausdorff,
so is $\s\times\gt$. 
			
We show that 
$\gbg$ is closed in $\s\times\gt$. Suppose that 
$\{(H_{i},\gamma_{i})\}$ is a sequence in 
$\gbg$ converging to some $(H,\gamma)$ in $\s\times\gt$. 
Then $H_{i} \to H$ in $\mathscr{C}(\gt)$,
$\gamma_{i}\in H_{i}$	for every $i$, and 
$\gamma_{i}\to \gamma$. Thus $\gamma\in H$, by the characterization
of convergence in $\mathscr{C}(\gt)$ (Lemma H.2, \cite{CrossedProd}). Then
$(H,\gamma)\in \gbg$, which shows that $\gbg$ is closed in $\s\times\gt$. 
Since $\gbg$ is closed	in $\s\times\gt$, it is	locally compact. Second-countability 
and Hausdorffness are automatic for subspaces. Thus 
$\gbg$ is second-countable, locally compact and Hausdorff.
\end{proof}
		
The next lemma is used in the proof of 	Proposition \ref{Borelprop}. It 
shows that we can associate with every $f\in C_{c}(G)$ a function in 
$C_{c}(\gbg)$.
		
\begin{lem} \label{compsuppfunct}
	Suppose that $f\in C_{c}(G)$. For all $(H,\gamma)\in \gbg$ define 
	$$F(H,\gamma):=f(\gamma).$$ 
	Then $F\in C_{c}(\gbg)$.
\end{lem}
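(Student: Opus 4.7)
The plan is to verify the two defining properties of $F \in C_c(\gbg)$: continuity and compact support. Continuity is immediate: the second-coordinate projection $(H,\gamma) \mapsto \gamma$ from $\s \times \gt$ to $\gt$ is continuous, hence so is its restriction $p : \gbg \to \gt$; since $\gt \subseteq G$ is a closed subspace and $f \in C(G)$, the composition $F = f \circ p$ is continuous on $\gbg$.

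For compact support, set $K := \supp(f) \cap \gt$. As a closed subset of the compact set $\supp(f)$, $K$ is compact, and it is closed in $\gt$. Hence
\[ B := \{(H,\gamma) \in \gbg : \gamma \in K\} \]
is closed in $\gbg$ and contains $\{F \neq 0\}$, so $\supp(F) \subseteq B$. It therefore suffices to prove $B$ is compact.

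The key step, and the main obstacle, is showing that $B$ is closed in the compact subspace $(\s \cup \{\emptyset\}) \times K$ of $\mathscr{C}(\gt) \times \gt$. Suppose $(H_i, \gamma_i) \in B$ converges to $(H, \gamma) \in (\s \cup \{\emptyset\}) \times K$. By the Fell-topology characterization of convergence (\cite[Lemma H.2]{CrossedProd}), the combination $\gamma_i \in H_i$, $\gamma_i \to \gamma$, and $H_i \to H$ forces $\gamma \in H$, so $H \neq \emptyset$. To see that $H$ is a closed subgroup contained in a single fibre of the group bundle $\gt$, take $a, b \in H$. The same Fell characterization furnishes (after passing to subsequences) $a_i, b_i \in H_i$ with $a_i \to a$ and $b_i \to b$; since $a_i$ and $b_i$ share the common unit of the subgroup $H_i$, continuity of the source map yields $s(a) = s(b)$, so $a b^{-1}$ is defined in $\gt$. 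Continuity of inversion and multiplication then give $a_i b_i^{-1} \to a b^{-1}$, and since $a_i b_i^{-1} \in H_i$, the Fell characterization yields $a b^{-1} \in H$. Thus $H \in \s$ and $(H, \gamma) \in B$, so $B$ is closed in the compact set $(\s \cup \{\emptyset\}) \times K$, hence compact. Since $\supp(F)$ is a closed subset of the compact set $B$, it is compact, completing the argument.
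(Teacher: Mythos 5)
Your proof is correct and follows essentially the same route as the paper's: continuity via composition with the second-coordinate projection, and compact support by confining $\supp(F)$ to the compact set $(\s\cup\{\emptyset\})\times(\supp(f)\cap\gt)$ and using the Fell-topology convergence criterion (Lemma H.2 of Williams) to see that limits of points of $\gbg$ stay in $\gbg$. The only differences are cosmetic: the paper argues by sequential compactness of $\supp(F)$ directly rather than exhibiting it as a closed subset of a compact set, and your extra verification that the limit $H$ is a closed subgroup is redundant once the limit has been placed in $\s\cup\{\emptyset\}$ (whose nonempty elements are by definition closed subgroups), although that computation does amount to a proof that $\s\cup\{\emptyset\}$ is closed in $\mathscr{C}(\gt)$, a fact both you and the paper otherwise take for granted.
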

	
\begin{proof}
Note that $F$ is just $f$ composed with the projection onto 
the second coordinate. Since both $f$ and the projection is continuous, it follows that 
$F$ is also continuous. 

We show that $F$ has compact support. Let $\{(H_{i},\gamma_{i})\}$ 
be a sequence in the support of
$F$. Then $\{\gamma_{i}\}$ is a sequence in the support of $f$. 
Since $f$ has compact support, $\{\gamma_{i}\}$
has a convergent subsequence such that (after relabelling) $\gamma_{j} \to \gamma$ in
$\supp(f)$.	Then $\{H_{j}\}$ is a sequence in the compact 
space $\s\cup\{\emptyset\}$. Thus $\{H_{j}\}$ has a convergent subsequence, 
such that (after relabelling) $H_{k}\to H$ in $\s\cup\{\emptyset\}$. 
Since $\gamma_{k} \in H_{k}$ and $\gamma_{k} \to \gamma$, 
the characterization of convergent sequences in $\mathscr{C}(\gt)$ (Lemma H.2, \cite{CrossedProd}) implies that $\gamma\in H$.
Thus $(H_{i},\gamma_{i})$ has a convergent subsequence $(H_{k},\gamma_{k})$ 
converging to $(H,\gamma)$ in $\supp(F)$.
Thus $\supp(F)$ is compact., and so $F\in C_{c}(\gbg)$.
\end{proof}

In \cite{Ren91}, Renault deals with possibly non-Hausdorff groupoids. He therefore
introduces locally conditionally compact groupoids. 
We use a result from \cite{Ren91} in Proposition \ref{Borelprop} to claim 
that $\gbg$ has a Haar system. So we need to know 
that $\gt=\cup_{u\in \gus}\gud{u}{u}$ is locally conditionally compact.  

 	A set $L$ in a groupoid $G$ is called \textit{left (respectively right) 
 	conditionally compact} 
	if for every compact set $K\subset \gus$, the set $KL=L\cap r^{-1}(K)$ 
	(respectively $LK=L\cap s^{-1}(K)$) is compact. The set $L$ is called
	\textit{conditionally compact} if it is both left and right conditionally
	compact. If every point in the groupoid has a conditionally compact neighbourhood, then $G$ is \textit{locally conditionally compact}.

\begin{lem} \label{lcc}
	Let $G$ be a locally compact Hausdorff groupoid.
	Then \newline $\gt=\cup_{u\in \gus}\gud{u}{u}$ is locally conditionally 
	compact.  
\end{lem}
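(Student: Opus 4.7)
The plan is to reduce the lemma to the observation that compact neighborhoods are automatically conditionally compact when the unit space is Hausdorff, so it suffices to exhibit compact neighborhoods in $\gt$.

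First, I would check that $\gt$ is closed in $G$. The map $(r,s) : G \to \gus \times \gus$ is continuous, and since $G$ is Hausdorff so is $\gus$, hence the diagonal $\Delta \subset \gus \times \gus$ is closed. Because $\gt = \{x \in G : r(x) = s(x)\} = (r,s)^{-1}(\Delta)$, it follows that $\gt$ is closed in $G$. (This was already used implicitly when defining $\gt$ in the preliminaries.) Consequently $\gt$, with its subspace topology, is locally compact Hausdorff, so every $\gamma \in \gt$ has a compact neighbourhood $V$ in $\gt$.

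Next, I would verify that any such $V$ is conditionally compact in the sense of the definition just given. Let $K \subset \gus$ be compact. Since $\gus$ is Hausdorff, $K$ is closed in $\gus$, so $r^{-1}(K)$ is closed in $G$ (and in $\gt$). Therefore
\[
KV \;=\; V \cap r^{-1}(K)
\]
is a closed subset of the compact set $V$, and hence compact. The identical argument with $s$ in place of $r$ shows $VK = V \cap s^{-1}(K)$ is compact, so $V$ is conditionally compact. Since every $\gamma \in \gt$ admits such a neighbourhood, $\gt$ is locally conditionally compact.

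There is no real obstacle: the only thing worth flagging is that the argument relies on $\gus$ being Hausdorff (to get that singletons and compacts in $\gus$ are closed, and that $\gt$ is closed in $G$). In Renault's non-Hausdorff setting this step fails, which is precisely why the notion of local conditional compactness was introduced; in our Hausdorff setting it is an immediate consequence of local compactness of $\gt$.
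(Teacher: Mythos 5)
Your proof is correct and follows essentially the same route as the paper's: observe that $\gt$ is closed in $G$ (hence locally compact Hausdorff), take a compact neighbourhood of each point, and note that its intersection with the closed sets $r^{-1}(K)$ and $s^{-1}(K)$ is compact. The only cosmetic difference is that you spell out why $\gt$ is closed via the diagonal, which the paper records separately in the preliminaries.
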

		
	\begin{proof}
		Since $\gt$ is closed in $G$, it is locally compact and Hausdorff in the 
		relative topology from $G$. 
		Let $\gamma \in \gt$. Then $\gamma$ has a compact neighbourhood
		$L\subset\gt$. Let $K$ be any compact set in $\gus$. 
		Since $\gus$ is Hausdorff, $K$ is 
		closed in $\gus$. Since $r_{\gt}:=r|_{\gt}$ is continuous, 
		$r_{\gt}^{-1}(K)$ is closed in $\gt$. 
		The intersection of the compact set $L$ and the closed set $r_{\gt}^{-1}(K)$
		is compact in $\gt$. Hence $\gt$ is left conditionally
		compact. Put $s_{\gt}:=s|_{\gt}$. 
		Since $r_{\gt}=s_{\gt}$ on $\gt$, it follows that $L\cap s^{-1}(K)$ is 
		also compact. Thus $\gt$ is right conditionally
		compact. Since every $\gamma\in G$ has a conditionally compact neighbourhood,  
		$\gt$ is locally conditionally compact. 
	\end{proof}

Now we prove Proposition \ref{Borelprop}.

\begin{proof}[Proof of Proposition \ref{Borelprop}]	\label{proofContHaarChoice}
Recall that $\gbg$ is a group-bundle groupoid with unit space $\s$.
By Lemma \ref{lcc}, $\gt$ is locally conditionally compact. Hence, by 
\cite[Corollary 1.4]{Ren91}, the groupoid $\gbg$ has a continuous  Haar system 
$\{\beta^H\}_{H\in\s}$.
			
Set $\beta^u:=\beta^{\gudu}$ for each $u \in \gus$. We claim that each 
$\beta^u$ is a Haar measure on $\gudu$. From the definition of a Haar 
system, each $\beta^{u}$ is a non-zero Radon measure such that 
$\supp(\beta^{u})=\supp(\beta^{\gudu})=\gudu$. 
With $f\in C_{c}(G)$, we set $F(H,\gamma):=f(\gamma)$. Then $F\in C_{c}(\gbg)$ 
by Lemma \ref{compsuppfunct}. 
 Suppose that $x,\gamma\in \gudu$. Then the left invariance of  the
 Haar system $\{\beta^H\}_{H\in\s}$ gives		 
\begin{eqnarray*}
  \int_{\gudu}{f(x\gamma)d\beta^{u}(\gamma)}&=& 
 			\int_{\gud{u}{u}}{F(\gudu,x\gamma)\: d\beta^{\gudu}(\gamma) } \\
	  &=& \int_{\gud{u}{u}}{F[(\gudu,x)(\gudu,\gamma)]\: d\beta^{\gudu}(\gamma)}  \\
	  &=& \int_{\gud{u}{u}}{F(\gudu,\gamma)\: d\beta^{\gudu}(\gamma) }  \\
	   &=& \int_{\gud{u}{u}}{f(\gamma)\: d\beta^{u}(\gamma) }.
\end{eqnarray*}
Hence every $\beta^{u}$ is a non-zero left invariant Radon measure on $\gudu$, that is,
$\beta^{u}$ is a Haar measure on $\gudu$.

For every $H\in\s$, let $\Delta_{H}$ denote the modular function of the 
group $H$ corresponding to a Haar measure $\beta^{H}$. For the particular 
case where $H=\gudu$ for some $u\in \gus$, we write $\Delta_{u}:=\Delta_{\gudu}$.
By \cite[Lemma 5.3]{Cla07} the map $D:\gbg \to \mathbb{R}$
given by $D(H,\gamma)=\Delta_{H}(\gamma)$ is continuous. 
Hence the pointwise product $F\cdot D^{-1/2}$ belongs to $C_{c}(\gbg)$. Since 
$\{\beta^H\}_{H\in\s}$ is a Haar system, the map 
\begin{equation}\label{map:ctsUnitspace}
\gudu\mapsto \int_{\gudu}{F(\gudu,\gamma)D(\gudu,\gamma)^{-1/2}\: d\beta^{\gudu}(\gamma)}
\end{equation}
is continuous. Also, by \cite[Lemma 1.5]{Ren91} the stabilizer  map $u\mapsto \gudu$ is Borel. 
By composing the stabilizer map with (\ref{map:ctsUnitspace}), we get that 
\begin{eqnarray*}
				u\mapsto \int_{\gudu}{F(\gudu,\gamma)D(\gudu,\gamma)^{-1/2}\: d\beta^{\gudu}(\gamma)}
				=\int_{\gudu}{f(\gamma)\Delta_{u}(\gamma)^{-\frac{1}{2}}\: d\beta^u (\gamma)}
\end{eqnarray*}
is Borel.
\end{proof}

\section{A DIRECT INTEGRAL REPRESENTATION OF $C^*(G)$}  \label{directint}
In this section we construct a direct integral representation of 
$C^*(G)$, and give a condition on the measure which ensures that the direct integral representation is type I. The idea is to associate with every $u\in \gus$
the irreducible representation $l^u$ of $C^*(G)$. Then with an appropriate 
measure on $\gus$, we combine all the $l^u$'s and their representations
spaces in a measurable way to form a new representation
of $C^*(G)$. We construct our direct-integral Hilbert space via a Borel Hilbert bundle, as
defined Appendix F.2 of \cite{CrossedProd}. Thus
our first goal is to construct a Borel Hilbert bundle.

A \textit{Polish space} is a topological space which is homeomorphic to 
a separable complete metric space. A subset $E$ in a Polish space $X$ 
is  \textit{analytic} if there is a Polish space 
$Y$ and a continuous map $f:Y \to X$ such that $f(Y)=E$.

Suppose that $\{\fH{x}\}_{x\in X}$ is a family of non-zero Hilbert spaces indexed by 
	a set $X$. Let 
	$$X\ast \fH{}:=\{(x,h):x\in X, h\in \fH{x}\}$$
	be the disjoint union and
	$\rho:X\ast \fH{} \to X$  the projection onto the first 
	coordinate. A \textit{section} is a function $f:X \to X\ast \fH{}$ 
	such that $\rho\circ f(x)=x$. So a section has the form 
	$f(x)=(x, \hat{f}(x))$, with $ \hat{f}(x) \in \fH{x}$. As is common in the literature, we don't always make a distinction between $f$ and $\hat{f}$.

We recall the definition of a Borel Hilbert bundle. 
	
\begin{dfn}\cite[Definition F.1]{CrossedProd}\label{dfnbhb}
	Let $\fH{}=\{\fH{x}\}_{x\in X}$ be a family of separable 
	Hilbert spaces indexed by an analytic 
	Borel space $X$. Then $(X\ast \fH{},\rho)$ is a \textit{Borel Hilbert bundle}
	if $X\ast \fH{}$ has a Borel structure such that:
	\begin{enumerate}
	\itemsep0em
		\item[(a)] $\rho$ is a Borel map  
		\item[(b)] there is a sequence $\{f_{n}\}$ of sections  such that 
		\begin{enumerate}
			\item[(b1)] the maps $\tilde{f_{n}}:X\ast \fH{} \to \mathbb{C}$, defined by 
				$$\tilde{f_{n}}(x,h):=( \hat{f}_{n}(x)\mid h)_{\fH{x}},$$
				are Borel for each $n$, 
			\item[(b2)] for every $m$ and $n$,
				$$ x \mapsto ( \hat{f}_{n}(x)\mid \hat{f}_{m}(x))_{\fH{x}}$$
				is Borel, and 
			\item[(b3)] the functions $\{\tilde{f_{n}}\}\cup\{\rho\}$ 
				separate points of $X\ast \fH{}$. \\
		\end{enumerate}
	\end{enumerate}
The sequence $\{f_{n}\}$ is called a \textit{fundamental sequence} for $(X\ast \fH,\rho)$. 
A \textit{Borel section} is a section $f$ of $(X\ast \fH,\rho)$ such that
	\begin{equation*} 
		x \mapsto ( \hat{f}(x)\mid \hat{f}_{n}(x))_{\fH{x}} 
	\end{equation*}
	is Borel for all $n$. Let $\ca{B}(X\ast \fH{})$ be the set of all Borel sections.
\end{dfn}

Suppose that $u\in\gus$. Recall that $l^u$ denotes the irreducible representation of $C^*(G)$ that acts on the completion $\fH{u}$ of $C_{c}(\gud{}{u})$.
We show that $$\bhb:=\{(u,h):u\in \gus, h\in\fH{u}\}$$
	 is a Borel Hilbert bundle by invoking the following theorem: 
	
\begin{prop}\cite[Proposition F.8]{CrossedProd} \label{bhbthm} 
Suppose that $X$ is an analytic Borel space and that $\fH{}=\{\fH{x}\}_{x\in X}$ 
is a family of separable Hilbert spaces. Suppose that $\{f_{n}\}$ 
is a countable family of sections of $X\ast \fH{}$
such that conditions $(b2)$ and $(b3)$ of Definition \ref{dfnbhb} are satisfied. 
Then there is a unique analytic Borel structure on 
$X\ast \fH{}$ such that $(X\ast \fH,\rho)$
becomes an analytic Borel Hilbert bundle and $\{f_{n}\}$ is a fundamental sequence.
 \end{prop}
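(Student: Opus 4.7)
The plan is to construct the Borel structure on $X \ast \fH{}$ by transporting it from a standard model in which the separating family $\{\rho\} \cup \{\tilde{f_n}\}$ plays the role of coordinate functions. Since conditions (a) and (b1) force $\rho$ and every $\tilde{f_n}$ to be Borel, the natural candidate is the $\sigma$-algebra generated by these maps; I would then verify each conclusion of the statement against this choice.

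Concretely, define $\Phi \colon X \ast \fH{} \to X \times \mathbb{C}^{\mathbb{N}}$ by $\Phi(x,h) := (x, (\tilde{f_n}(x,h))_n)$. Condition (b3) is exactly the statement that $\Phi$ is injective, so equipping $X \ast \fH{}$ with the pullback of the product Borel structure on $X \times \mathbb{C}^{\mathbb{N}}$ yields a well-defined Borel structure on $X \ast \fH{}$ for which $\rho$ and each $\tilde{f_n}$ are Borel by construction, establishing (a) and (b1). Condition (b2) is precisely the assertion that $x \mapsto (\hat f_n(x) \mid \hat f_m(x))_{\fH{x}}$ is Borel, which says exactly that each $f_n$ is a Borel section relative to the sequence $\{f_n\}$ itself; hence $\{f_n\}$ is a fundamental sequence for the resulting bundle.

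The main technical step is to show that the image $\Phi(X \ast \fH{})$ is an analytic (indeed Borel) subset of $X \times \mathbb{C}^{\mathbb{N}}$, so that $X \ast \fH{}$ inherits an analytic Borel structure. Condition (b3) forces $\overline{\Span}\{\hat f_n(x) : n \in \mathbb{N}\} = \fH{x}$ in each fibre (otherwise two distinct vectors in that fibre would not be separated by any $\tilde{f_n}$), so the restriction $\Phi_x \colon \fH{x} \to \mathbb{C}^{\mathbb{N}}$ is injective and its image is determined by the Gram matrix $G(x) = ((\hat f_m(x) \mid \hat f_n(x)))_{m,n}$, which is Borel in $x$ by (b2). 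A sequence $(c_n)$ lies in the range of $\Phi_x$ precisely when the assignment $\hat f_n(x) \mapsto c_n$ extends to a bounded linear functional on $\Span\{\hat f_n(x)\}$, and this extension condition translates into a Borel family of inequalities in the entries of $G(x)$ and the $c_n$. Hence $\Phi(X \ast \fH{})$ admits a Borel description in $X \times \mathbb{C}^{\mathbb{N}}$.

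The main obstacle will be controlling the rank variation of $\{\hat f_n(x)\}$, since the dimension of $\fH{x}$ and its linear dependencies may vary with $x$ in a merely Borel manner. A measurable Gram--Schmidt procedure, driven by the Borel Gram matrix $G$, produces a Borel choice of orthonormal sections on each fibre; this both delivers analyticity of $\Phi(X \ast \fH{})$ and supplies the coordinate system identifying $X \ast \fH{}$ with a standard analytic Borel space. Uniqueness is then automatic: any Borel structure on $X \ast \fH{}$ making $\rho$ and the $\tilde{f_n}$ Borel must contain the $\sigma$-algebra these maps generate, and since this family separates points by (b3), no Borel structure compatible with the hypotheses can be strictly larger.
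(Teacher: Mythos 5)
The paper does not prove this statement: it is quoted verbatim from Williams \cite[Proposition F.8]{CrossedProd} and used as a black box, so there is no internal proof to compare against. Measured against the standard proof in Williams, your argument follows essentially the same strategy --- take the $\sigma$-algebra generated by $\rho$ and the $\tilde f_n$, i.e.\ the pullback under $\Phi(x,h)=(x,(\tilde f_n(x,h))_n)$, and show it is analytic. Your observation that (b3) forces $\overline{\operatorname{span}}\{\hat f_n(x)\}=\fH{x}$ fibrewise is correct and is the key point. Your route to analyticity (characterising the image of $\Phi_x$ via the Gram matrix: $(c_n)$ is in the range iff there exists $M\in\mathbb{N}$ with $|\sum\alpha_n c_n|^2\le M\sum_{m,n}\alpha_m\overline{\alpha_n}(\hat f_n(x)\mid\hat f_m(x))$ for all finitely supported rational $\alpha$, a countable union of countable intersections of Borel conditions) is sound and, note, already handles the varying rank of $\{\hat f_n(x)\}$; the measurable Gram--Schmidt step in your last paragraph is therefore redundant for analyticity, though it is the device Williams actually uses to trivialise the bundle over the Borel sets $X_n=\{x:\dim\fH{x}=n\}$.

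The one genuine soft spot is the uniqueness argument. Point separation by a countable family of Borel functions does not by itself prevent a strictly larger $\sigma$-algebra (the Borel sets of $\mathbb{R}$ are countably separated, yet sit inside the power set). What you need is the unique Borel structure theorem for analytic Borel spaces: if $\mathcal{B}$ is any other analytic Borel structure making $(X\ast\fH{},\rho)$ an analytic Borel Hilbert bundle with $\{f_n\}$ fundamental, then by (a) and (b1) it contains the pullback $\sigma$-algebra $\mathcal{B}_0$, so the identity map $(X\ast\fH{},\mathcal{B})\to(X\ast\fH{},\mathcal{B}_0)$ is a Borel bijection of analytic Borel spaces with countably separated target, hence a Borel isomorphism (e.g.\ \cite[Theorem 3.3.4 and its corollaries]{ArvInvitation}), forcing $\mathcal{B}=\mathcal{B}_0$. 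With that citation supplied, your proof is complete.
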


To apply Proposition \ref{bhbthm} we need candidates for sections that satisfy
conditions $(b2)$ and $(b3)$ of Definition \ref{dfnbhb}. We use a sequence of functions from $C_{c}(G)$ that is dense in the inductive limit topology, and such that their restrictions to $C_{c}(G_u)$ are dense in $\fH{u}$, for every $u\in\gus$. 
It is almost certainly well-known to experts that $C_{c}(G)$ is separable in the inductive limit topology. We still give a proof as the construction is used to show that the sequence is also dense in each $\fH{u}$ when restricted to $C_{c}(G_u)$. 
When considering the inductive limit 
topology, it will suffice to know that if $f_{i}\to f$ 
uniformly and the $\supp (f_{i})$ is eventually contained in
some compact set, then 
$f_{i}\to f$ in the inductive limit topology.
Note: the converse of this statement is false (see 
\cite[Example D.9]{MoritaEq} for a counter example).

\begin{lem} \label{denseinductivelim} 
	There is a countable sequence of functions $\{f_{i}\}$ in 
	$C_{c}(G)$ which is dense in $C_{c}(G)$ in the inductive limit topology.
	Moreover,	the restrictions $\{f_{i}|_{G_u}\}$ are dense in $\fH{u}$
	for every $u \in \gus$.
\end{lem}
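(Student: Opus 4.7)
The idea is to build $\{f_{i}\}$ from a $\sigma$-compact exhaustion of $G$ and then transfer density to each $\fH{u}$ via a Tietze-type extension, using that convergence in the inductive-limit topology on $C_{c}(G_{u})$ forces convergence in the $\fH{u}$-seminorm.

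First, since $G$ is second-countable locally compact and Hausdorff, it is metrizable and $\sigma$-compact, so there is an exhaustion $K_{1}\subset \mathrm{int}(K_{2})\subset K_{2}\subset \mathrm{int}(K_{3})\subset\cdots$ by compact sets with $\bigcup_{n}K_{n}=G$. For each $n$ the Banach space $C_{K_{n}}(G):=\{g\in C(G):\supp(g)\subset K_{n}\}$, with the sup norm, embeds isometrically via restriction into $C(K_{n+1})$ as the closed subspace of functions vanishing on $K_{n+1}\setminus K_{n}$. Since $K_{n+1}$ is compact metric, $C(K_{n+1})$ is separable, and hence so is $C_{K_{n}}(G)$. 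Choosing a countable dense $D_{n}\subset C_{K_{n}}(G)$ for each $n$, set $\{f_{i}\}:=\bigcup_{n}D_{n}$. Any $f\in C_{c}(G)$ has support in some $K_{n}$ and is a sup-norm limit of elements of $D_{n}$ whose supports remain in the fixed compact set $K_{n}$; by the criterion recalled just before the lemma this is convergence in the inductive limit topology. Hence $\{f_{i}\}$ is countable and dense.

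For the second claim, fix $u\in \gus$ and $\phi\in C_{c}(G_{u})$. Because $G_{u}=s^{-1}(u)$ is closed in the normal space $G$ and $\phi$ is bounded, Tietze extension supplies a continuous extension of $\phi$ to $G$; multiplying by a $C_{c}(G)$-bump function equal to $1$ on a neighbourhood of $\supp(\phi)$ yields $F\in C_{c}(G)$ with $F|_{G_{u}}=\phi$. By the first part some subsequence $f_{i_{k}}\to F$ in the inductive limit topology, so the $f_{i_{k}}$ converge uniformly to $F$ with supports in a single compact $K\subset G$. Restricting to $G_{u}$ gives $f_{i_{k}}|_{G_{u}}\to \phi$ uniformly, with all supports inside the compact set $K\cap G_{u}\subset G_{u}$.

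It remains to upgrade uniform convergence on $G_{u}$ to convergence in $\fH{u}$. For any $\psi\in C_{c}(G_{u})$ supported in a compact $K_{0}\subset G_{u}$, the convolution $\psi^{*}\ast \psi$ is supported in the compact set $K_{0}^{-1}K_{0}\cap \gudu$, on which the continuous modular function $\Delta_{u}$ is bounded. A direct estimate of (\ref{eq:InducedInnerprod}) then gives $\|\psi\|_{u}\leq C(u,K_{0})\|\psi\|_{\infty}$ for some constant depending only on $u$ and $K_{0}$. Applying this to $f_{i_{k}}|_{G_{u}}-\phi$ (all supported in $K\cap G_{u}$) yields $f_{i_{k}}|_{G_{u}}\to\phi$ in $\fH{u}$, and since $C_{c}(G_{u})$ is dense in $\fH{u}$ by construction, $\{f_{i}|_{G_{u}}\}$ is dense in $\fH{u}$. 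The main technical point is this last estimate, which rests only on the continuity of $\Delta_{u}$ (standard for the modular function of a locally compact group) and on compactness of $K_{0}^{-1}K_{0}\cap \gudu$; the Tietze step is routine once one notes that $G_{u}$ is closed in the normal space $G$.
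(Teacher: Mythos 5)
Your proposal is correct and follows essentially the same route as the paper: a countable dense family built from a $\sigma$-compact exhaustion of $G$ and separability of the corresponding sup-norm Banach spaces, then extension of functions on $G_u$ to $C_c(G)$ followed by the estimate $\|\psi\|_u\leq C(u,K_0)\|\psi\|_\infty$ for $\psi$ supported in a fixed compact set, using compactness of $K_0^{-1}K_0\cap\gudu$ and continuity of $\Delta_u$. The paper uses \cite[Lemma 1.42]{CrossedProd} for the extension and an exhaustion by relatively compact open sets $U_i$ with $C_0(U_i)$ separable, but these are only cosmetic differences.
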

		
	\begin{proof}
		Suppose that $U$ is an open set with compact closure in $G$. 
		Then every $f\in C_{0}(U)$  
		extends to $C_{c}(G)$ by putting $f(x)=0$ if $x\notin U$. 
		In this way we view $C_{0}(U)$ as a *-subalgebra of $C_{c}(G)$ 
		consisting of functions which vanish outside of the 
		compact set $\overline{U}$. Since $G$ is second-countable and locally compact,
		we can write $G$ as the union of a sequence of open sets $\{U_{i}\}$  such 
		that $U_{i}\subset U_{i+1}$ and  
		$\overline{U_{i}}$ is compact. The set $U_{i}$ is second-countable for 
		every $i\in \mathbb{N}$.
		Thus $C_{0}(U_{i})$ is a separable Banach space 
		in the uniform norm $||\cdot||_{\infty}$.  
		For each $i\in \mathbb{N}$, let $\{f_{i_{k}}\}_{k}$  be a countable 
		dense set in $C_{0}(U_{i})$, 
		which we view as a subset of $C_{c}(G)$. 
			
		Let $f\in C_{c}(G)$. We show that $f$ can be approximated by functions
		of the form  $\{f_{i_{k}}\}_{k}$ in the inductive limit topology.
		Since $\{U_{i}\}$ is an increasing sequence of open sets 
		with compact closure,  it follows that 
		$\supp(f)\subset U_{i}$ for some $i$. Then $f\in C_{0}(U_{i})$ and there is
		a subsequence $\{f_{i_{k(j)}}\}_{j}$ of $\{f_{i_{k}}\}_{k}$ such that
		$f_{i_{k(j)}} \to f$
		uniformly  in $C_{0}(U_{i})$ as $j\to\infty$.
		Viewing each $f_{i_{k(j)}}$ as an element of $C_{c}(G)$ we have 
		$\supp(f_{i_{k(j)}}) \subset \overline{U_{i}}$ for all $j$. 
		Thus $f_{i_{k(j)}}$ converges to $f$ in $C_{c}(G)$ 
		in the inductive limit topology. Since $f$ was arbitrary, it follows that 
		$\{f_{i_{k}}\}_{i,k}$ is countable and dense in $C_{c}(G)$ in the 
		inductive limit topology.
			
		Fix $u\in\gus$. We show that $\{f_{i_{k}}\}_{i,k}$ restricted
		to $G_u$ is dense in $\fH{u}$.
		Suppose that $h\in \fH{u}$ and let $||\cdot||_{u}$ denote the norm
		defined by the 
		inner product on $\fH{u}$. Let $\epsilon >0$. Then there is 
		an $f\in C_{c}(\gud{}{u})$
		such that 
		\begin{equation} \label{estimate1}
		||h-f||_{u}<\frac{\epsilon}{2}.
		\end{equation}
		Since
		$\gud{}{u}$ is closed in $G$, the support of $f$ is compact in $G$. By 
		\cite[Lemma 1.42]{CrossedProd}, we extend $f$ to $C_{c}(G)$ (using
		the same notation $f$ for the extension).
		Since $\{U_{i}\}$ is an increasing chain of relatively compact sets,
		there is an $i$ 
		such that $\supp(f)\subset U_{i}$. Then (after relabelling) there is a
		subsequence
		$\{f_{j}\}$ of $\{f_{i_{k}}\}_{i,k}$ such that $f_{j}\to f$ uniformly and  
		$\supp(f_{j})\subset\overline{U}_{i}$ for every $j$. 
		
We now consider two cases. First suppose that 
$\overline{U}_{i}\cap\gud{u}{u}=\emptyset$. Then  \newline
$\supp(f-f_j)\cap\gudu=\emptyset$, and thus  
\begin{eqnarray}
||f-f_{j}||_{u}&=& \left[ (f-f_{j},f-f_{j})_u  \right]^{1/2} \nonumber \\
				&=&  \left[ \int_{\gudu}{\left((f-f_j)^{*}\ast(f-f_j)\right)(t)
		  		 \: \Delta_{u}(t)^{-1/2}\:d\beta^u(t)}  \right]^{1/2}  \nonumber\\
		  		 &=& \left[\int_{\gudu}
			{\int_{\gudu}{(f-f_j)^{*}(s)
			(f-f_j)(s^{-1}t)\:d\beta^u(s)} \:\Delta_{u}(t)^{-1/2}
			\:d\beta^u(t)}\right]^{1/2}  \nonumber \\
			&=& 0. \label{eq:Case1Dense}
\end{eqnarray}

Second, suppose that
$\overline{U}_{i}\cap\gud{u}{u}\neq \emptyset$. 	Put 
$$M^{2}:=\left(\sup\{\Delta_{u}^{-1/2}(t):t\in 
\overline{U}_{i}\cap\gud{u}{u}\}\ \right)
\left(\beta^u(\overline{U}_{i}\cap \gud{u}{u})\right)^2.$$ 
Then $M^2<\infty$, since $\Delta_u$ is continuous in $t$ and
$\overline{U}_{i}\cap\gud{u}{u}$ is compact. Then we have that 
	
\begin{eqnarray}
||f-f_{j}||_{u}&=& \left[ (f-f_{j},f-f_{j})_u  \right]^{-1/2} \nonumber \\
			&=&  \left[ \int_{\gudu}{\left((f-f_j)^{*}\ast(f-f_j)\right)(t)
		  		 \: \Delta_{u}(t)^{-1/2}\:d\beta^u(t)}  \right]^{1/2}  \nonumber\\
		  	&=& \left[\int_{\gudu}
			{\int_{\gudu}{(f-f_j)^{*}(s)
			(f-f_j)(s^{-1}t)\:d\beta^u(s)} \:\Delta_{u}(t)^{-1/2}
			\:d\beta^u(t)}\right]^{1/2}  \nonumber \\
			&\leq& \left[||f-f_{j}||_{\infty}^{2} 
			 \sup_{t\in\overline{U}_{i}\cap\gud{u}{u}} \{ \Delta_{u}(t)^{-1/2}\}
			 \:\beta^u(\overline{U}_{i}\cap \gud{u}{u})^2 \right]^{1/2} \nonumber \\
			&=& ||f-f_{j}||_{\infty}M. \label{eq:Case2Dense}
\end{eqnarray}

Since $f_{j}\to f$ uniformly and  
$\supp(f_{j})\subset\overline{U}_{i}$ for every $j$, it follows that 
there is a $j_{0}\in \mathbb{N}$ such that if $j>j_{0}$, then
$$||f-f_{j}||_{\infty}<\frac{\epsilon}{2M}.$$
Fix any $j>j_{0}$.	Then it follows from  
(\ref{estimate1}), (\ref{eq:Case1Dense}) and (\ref{eq:Case2Dense}) that 

\begin{eqnarray*}
||h-f_{j}||_{u}  
 &\leq& ||h-f||_{u} + ||f-f_{j}||_{u} \\
&\leq& ||h-f||_{u} + ||f-f_{j}||_{\infty}M \\
&<&	\epsilon.
\end{eqnarray*}
Thus every neighborhood of $h$ contains some $f_{j} \in \{f_{i_{k}}\}_{i,k}$, 
which shows that $\{f_{i_{k}}\}_{i,k}$ is dense in $\fH{u}$. 
\end{proof}
		
Next we show that 	$\bhb$ is a Borel Hilbert bundle.	
	
\begin{prop} \label{bhbprop}
There is a sequence $\{f_{i}\}$ of functions that is dense in 
$C_{c}(G)$ in the inductive limit topology. For every 
$i$ and every $u\in\gus$ put
$$\hat{g}_{i}(u):=f_{i}|_{\gud{}{u}}.$$
Then $\bhb$ is a Borel Hilbert bundle with fundamental sequence
$\{g_{i}\}$ given by 
$$  g_{i}(u):=(u,\hat{g}_{i}(u)). $$ 
\end{prop}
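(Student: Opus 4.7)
The plan is to verify the hypotheses of Proposition \ref{bhbthm} for the candidate fundamental sequence $\{g_i\}$, and then simply invoke that proposition. First I note that $\gus$ is second-countable, locally compact and Hausdorff, hence Polish, hence an analytic Borel space; so the ``ambient'' hypothesis of Proposition \ref{bhbthm} is met. It therefore remains to exhibit a countable family of sections satisfying (b2) and (b3) of Definition \ref{dfnbhb}. Pick $\{f_i\}\subset C_c(G)$ from Lemma \ref{denseinductivelim} (dense in $C_c(G)$ in the inductive limit topology, with restrictions dense in each $\fH{u}$), and define $\hat g_i(u):=f_i|_{G_u}$ and $g_i(u):=(u,\hat g_i(u))$; since $f_i$ is continuous and compactly supported and $G_u$ is closed in $G$, each $\hat g_i(u)$ lies in $C_c(G_u)\subset\fH{u}$, so $g_i$ is a genuine section.

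For condition (b2), fix indices $m,n$. By the definition \eqref{eq:InducedInnerprod} of the inner product on $\fH{u}$,
\begin{equation*}
(\hat g_n(u)\mid \hat g_m(u))_{\fH{u}}
 = \int_{\gudu}(f_m^{*}\ast f_n)(t)\,\Delta_u(t)^{-1/2}\,d\beta^u(t).
\end{equation*}
Since $f_m,f_n\in C_c(G)$ and $G$ has a continuous Haar system, the convolution $f_m^{*}\ast f_n$ again lies in $C_c(G)$. Applying Proposition \ref{Borelprop} to this element of $C_c(G)$ yields immediately that $u\mapsto(\hat g_n(u)\mid \hat g_m(u))_{\fH{u}}$ is Borel, which is exactly (b2).

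For condition (b3), suppose $(u,h)\neq(v,h')$ in $\bhb$. If $u\neq v$, then $\rho(u,h)=u\neq v=\rho(v,h')$ and $\rho$ separates them. Otherwise $u=v$ and $h\neq h'$, so $h-h'\in\fH{u}$ is non-zero; by Lemma \ref{denseinductivelim} the set $\{\hat g_i(u)\}_i=\{f_i|_{G_u}\}_i$ is dense in $\fH{u}$, so not every inner product $(\hat g_i(u)\mid h-h')_{\fH{u}}$ can vanish, giving some $i$ with
\begin{equation*}
\tilde g_i(u,h)=(\hat g_i(u)\mid h)_{\fH{u}}\neq(\hat g_i(u)\mid h')_{\fH{u}}=\tilde g_i(u,h').
\end{equation*}
Hence $\{\tilde g_i\}\cup\{\rho\}$ separates points, verifying (b3).

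With (b2) and (b3) established, Proposition \ref{bhbthm} furnishes a unique analytic Borel structure on $\bhb$ under which $(\bhb,\rho)$ is a Borel Hilbert bundle and $\{g_i\}$ is a fundamental sequence, completing the proof. There is no real obstacle here: the only mild subtlety is recognising that the integrals appearing in (b2) are of the form covered by Proposition \ref{Borelprop} once one writes the $\fH{u}$-inner product through \eqref{eq:InducedInnerprod} and uses closure of $C_c(G)$ under convolution and involution; everything else is bookkeeping.
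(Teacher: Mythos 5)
Your proposal is correct and follows essentially the same route as the paper: verify that $\gus$ is analytic, check (b2) via the inner-product formula \eqref{eq:InducedInnerprod} together with Proposition \ref{Borelprop} applied to $f_m^{*}\ast f_n\in C_c(G)$, check (b3) from the density statement of Lemma \ref{denseinductivelim}, and invoke Proposition \ref{bhbthm}. The only cosmetic difference is that you argue (b3) directly while the paper phrases it as a proof by contradiction.
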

	
\begin{proof}
By Lemma \ref{denseinductivelim} there is a sequence $\{f_{i}\}\subset C_{c}(G)$
which is dense in $C_{c}(G)$ in the inductive limit topology. 

We continue our convention of writing $g_{i}(u)$ to mean $\hat{g}_{i}(u)$
if no confusion is possible.
We show that the conditions of Proposition \ref{bhbthm} are satisfied for 
$\bhb$ and $\{g_{i}\}$. 
Because $G$	is Hausdorff, $\gus$ is closed in $G$. Thus $\gus$ is 
also second-countable, locally compact and Hausdorff. 
Hence $\gus$ is Polish (\cite[Lemma 6.5]{CrossedProd}), and 
thus an analytic Borel space.

Next we show that the sequence of sections $\{g_{i}\}$
satisfy conditions (b2) and (b3) of Definition \ref{dfnbhb}. 
Fix $m,n\in \mathbb{N}$. Then
$$ u\mapsto ( g_{n}(u)\mid g_{m}(u))_{u}= 
 \int_{\gudu}{(f_{m}^{*}*f_{n})(t)\:\Delta_{u}(t)^{-\frac{1}{2}}
 \: d\beta^{u}(t)}, $$
which is Borel by Proposition \ref{Borelprop}. Thus (b2) is satisfied.
			
Let $\rho:\bhb\to \gus$ be the projection onto 
the first coordinate. Let $\tilde{g}_{i}$ be as in Definition \ref{dfnbhb} (b1). We show  that $\{\tilde{g_{i}}\}\cup \{\rho\}$
separate the points of $\bhb$. Suppose that $\{\tilde{g_{i}}\}\cup \{\rho\}$ 
do not separate points. Then there exist distinct points
$(u,h)$ and $(v,k)$ in $\bhb$ such that, for every 
$\phi\in\{\tilde{g_{i}}\}\cup \{\rho\}$, we have $\phi(u,h)=\phi(v,k)$.
First notice that if $\phi=\rho$, then $\rho(u,h)=\rho(v,k)$ implies 
$u=v$. Thus $k\in\fH{u}$. Then, besides $\rho$, we have 
$\tilde{g_{i}}(u,h)=\tilde{g_{i}}(u,k)$ for every $i\in \mathbb{N}$.
That is, 	
$$( g_i(u) \mid h )= ( g_i(u) \mid k ), $$
or 
$$ 0=( g_i(u) \mid h-k ) = (f_{i}|_{\gud{}{u}} \mid h-k ).$$ 
Thus $h-k$ is in the
orthogonal compliment $\{f_{i}|_{\gud{}{u}} : i\in \mathbb{N}\}^{\bot}$ in $\fH{u}$.
By Lemma \ref{denseinductivelim} the set  $\{f_{i}|_{\gud{}{u}}\}$ 
is dense in $\fH{u}$. Thus 
$\{f_{i}|_{\gud{}{u}}: i\in \mathbb{N}\}^{\bot}=\{0\}$, 
implying $h=k$. So $(u,h)=(v,k)$, which 
contradicts the assumption that these points are distinct.
So condition (b3), and hence all the conditions of Proposition \ref{bhbthm} are satisfied, showing that $\bhb$ is a Borel Hilbert bundle with fundamental sequence $\{g_{i}\}$.
\end{proof}

Next we form an $L^2$-space with sections of
$\ca{B}(\bhb)$. To form this  $L^2$-space we use a quasi-invariant measure (see for example Definition 3.1 in \cite{Ren80}).
Since we work in second-countable, locally compact Hausdorff spaces, all Borel measures are $\sigma$-finite. The class of measures equivalent to any such $\sigma$-finite measure contains a finite measure. For groupoids the notions of quasi-invariance and ergodicity depend on the measure class. So we may assume without loss of generality that our quasi-invariant measure is a probability measure.  
Also, if $G$ is second-countable, locally compact and  Hausdorff, then 
there always exists a quasi-invariant measure on $\gus$,
\cite[Proposition 3.6]{Ren80}. 
	
Let $\mu$ be a quasi-invariant measure on $\gus$ and let
$$\mathscr{L}^2(\bhb,\mu):=\{f\in \ca{B}(\bhb): 
u\mapsto || f(u)||^{2}_{u} \text{ is $\mu$-integrable}\}.$$
Let $L^2(\bhb,\mu)$ be the vector space formed by taking the 
quotient of  $\mathscr{L}^2(\bhb,\mu)$, where sections agreeing $\mu$-a.e. 
are equivalent. As is common in the literature, we use the same symbol $f$ for the class of sections in $L^2(\bhb,\mu)$ to which $f$ belongs. 
Let $f,g\in L^2(\bhb,\mu)$. The functions $u\mapsto || f(u)||^{2}_{u}$ and 
$u\mapsto || g(u)||^{2}_{u}$ belong to $L^2(\gus,\mu)$. 
By H{\"o}lder's inequality  $u\mapsto || f(u)||_u\:|| g(u)||_u$ is in $L^1(\gus,\mu)$. 	
Then, by the Cauchy-Schwarz inequality 
 \begin{eqnarray*}
		\left| \int_{\gus}{( f(u)\mid g(u))_{u}d\mu(u)} \right|	 
		&\leq&  \int_{\gus}{\big{|}( f(u)\mid g(u))_{u}\big{|}\: d\mu(u)}   \\		
		&\leq&  \int_{\gus}{|| f(u)||_{u} || g(u)||_{u}\:  d\mu(u)} \\ 
		&<& \infty.
	 \end{eqnarray*}
	Thus $u\mapsto ( f(u)\mid g(u))_{u}$ is $\mu$-integrable, and so
	$$ (f\mid g) :=\int_{\gus}{( f(u)\mid g(u))_{u}\:  d\mu(u)} $$
	defines an inner product on $L^2(\bhb,\mu)$. 
	With this inner product $L^2(\bhb,\mu)$ is a Hilbert space.
	The Hilbert space $L^2(\bhb,\mu)$ is what is known as the Hilbert space
	direct integral, also denoted by 
	$\int_{\gus}^{\oplus}{\fH{u}\: d\mu(u)}$ in the literature.
	
	We turn our attention to the direct integral representation, 
	which will act on $L^2(\bhb,\mu)$. 
	Fix $a\in C^*(G)$. 
For every $f\in L^2(\bhb,\mu)$ and $u\in \gus$, define 
$(L(a)f)(u):=(u,l^u(a)( f(u)))$. We write 
\begin{equation} \label{LaRep}
(L(a)f)(u)=l^u(a)( f(u)).
\end{equation}
to shorten notation.

\begin{prop} \label{Lrep}
Let $a\in C^*(G)$. Then $L(a)$ is a bounded linear operator on $L^2(\bhb,\mu)$, and
the map $a\mapsto L(a)$ defines a representation of 
$ C^*(G)$ on $L^2(\bhb,\mu)$. 
\end{prop}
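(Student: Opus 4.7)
The plan is to verify the statement in four steps: (i) $L(a)f$ is a Borel section whenever $a\in C_c(G)$ and $f\in \mathcal{B}(\bhb)$; (ii) $L(a)$ is a bounded operator on $L^2(\bhb,\mu)$; (iii) the assignment $a\mapsto L(a)$ satisfies the $*$-homomorphism axioms on $C_c(G)$; and (iv) everything extends by continuity to $C^*(G)$. The delicate point is (i); the rest is a pointwise translation of the fact that each $l^u$ is already a representation of $C^*(G)$.

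For (i), I would first show that for any $h\in C_c(G)$ the section $\sigma_h(u):=h|_{G_u}$ belongs to $\mathcal{B}(\bhb)$, because
$$(\sigma_h(u)\mid g_n(u))_u = \int_{\gudu}(f_n^{*}\ast h)(t)\Delta_u(t)^{-1/2}\,d\beta^u(t),$$
and $f_n^{*}\ast h\in C_c(G)$, so this is Borel in $u$ by Proposition \ref{Borelprop}. For a general Borel section $f$ and $a\in C_c(G)$, I would then rewrite
$$(L(a)f(u)\mid g_n(u))_u = (f(u)\mid l^u(a^{*})g_n(u))_u = (f(u)\mid (a^{*}\ast f_n)|_{G_u})_u$$
and approximate $h:=a^{*}\ast f_n\in C_c(G)$ by a subsequence $\{f_j\}$ drawn from the dense family constructed in Lemma \ref{denseinductivelim}. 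The estimate in the proof of that lemma yields $\|h|_{G_u}-f_j|_{G_u}\|_u\to 0$ for every $u\in \gus$, and consequently $(f(u)\mid h|_{G_u})_u$ is the pointwise-in-$u$ limit of the Borel functions $u\mapsto (f(u)\mid g_j(u))_u$, hence Borel.

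Steps (ii)--(iv) are straightforward. Since each $l^u$ is a $*$-representation of $C^*(G)$, $\|l^u(a)\|\le \|a\|$, and integrating gives $\|L(a)f\|^2\le \|a\|^2\|f\|^2$, so $\|L(a)\|\le \|a\|$ and $L(a)$ is linear. The identities $L(\alpha a+b)=\alpha L(a)+L(b)$, $L(ab)=L(a)L(b)$, and $L(a^{*})=L(a)^{*}$ all reduce to the corresponding pointwise identities for $l^u$; the adjoint identity is obtained efficiently by integrating $(l^u(a)f(u)\mid g(u))_u=(f(u)\mid l^u(a^{*})g(u))_u$ against $\mu$. Because $L$ is $C^{*}$-norm contractive on the dense $*$-subalgebra $C_c(G)$, it extends uniquely to a representation of $C^*(G)$ on $L^2(\bhb,\mu)$.

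The main obstacle is step (i): the subtlety is that the density of $\{f_j|_{G_u}\}$ in $\fH{u}$ supplied by Lemma \ref{denseinductivelim} is only pointwise in $u$ rather than uniform in $u$, so one cannot pass the approximation through the $L^2(\bhb,\mu)$-norm directly. The rescue is that Borel measurability is preserved under pointwise limits of Borel functions.
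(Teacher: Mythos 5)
Your proof is correct, and its overall architecture (measurability of $L(a)k$ for Borel sections $k$, the bound $\|l^u(a)\|\le\|a\|$ integrated against $\mu$, and the pointwise reduction of the $*$-homomorphism identities to the $l^u$) matches the paper's. The one place where you genuinely diverge is the measurability step. The paper invokes \cite[Proposition 1]{DixVN} to reduce the claim that $L(a)k\in\ca{B}(\bhb)$ for every $k\in\ca{B}(\bhb)$ to Borel-ness of the matrix coefficients $u\mapsto(l^u(a)g_n(u)\mid g_m(u))_u$ against the fundamental sequence alone; it verifies these for $h\in C_c(G)$ via the formula $\int_{\gudu}(f_m^{*}*h*f_n)(t)\Delta_u(t)^{-1/2}\,d\beta^u(t)$ and Proposition \ref{Borelprop}, and then passes to general $a\in C^*(G)$ by pointwise limits of Borel functions. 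You instead treat an arbitrary Borel section $k$ head-on: the adjoint identity $(l^u(a)k(u)\mid g_n(u))_u=(k(u)\mid (a^{*}*f_n)|_{G_u})_u$, together with the pointwise-in-$u$ approximation of $(a^{*}*f_n)|_{G_u}$ by members of the fundamental sequence (which Lemma \ref{denseinductivelim} does supply, since one uniformly convergent subsequence with supports in a fixed compact set works simultaneously for every $u$), reduces everything to the defining property of a Borel section plus closure of Borel functions under pointwise limits. This buys you independence from Dixmier's result on measurable fields of operators, at the cost of a slightly longer argument. One small point to tighten: since $L(a)$ is \emph{defined} pointwise for every $a\in C^*(G)$, your step (iv) of extending by continuity from $C_c(G)$ should be supplemented by the observation that for $h_i\to a$ in norm one has $l^u(h_i)k(u)\to l^u(a)k(u)$ in $\fH{u}$ for each $u$, so the matrix coefficients of $L(a)k$ are pointwise limits of Borel functions and the continuous extension agrees with the pointwise formula; this is precisely the paper's final limiting argument and is routine to add.
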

	
\begin{proof}
Note that the linearity of $L(a)$ follows from the linearity of each $l^u(a)$.
To show that $L(a)$ is a bounded linear operator on $L^2(\bhb,\mu)$, 
we first show that $L(a)$ maps Borel sections to Borel sections. 
Let $k\in \ca{B}(\bhb)$. We claim that $L(a)k\in \ca{B}(\bhb)$.
To see this, let $\{g_{i}\}$ be the fundamental 
sequence given by Proposition \ref{bhbprop}. Recall, 
$g_i(u)=(u,g_i(u))$ with $g_i(u)=f_i|_{G_u}$, and where $\{f_i\}$ is 
a countable sequence in $C_{c}(G)$ which is dense in the inductive limit topology.
We must show that
\begin{equation*} 
	u \mapsto ((L(a)k)(u)\mid g_{n}(u))_{u}=
	(l^u(a)( k(u))\mid g_{n}(u))_{u}
\end{equation*}
is Borel for every $n$.
Since $\{g_i\}$ is a fundamental sequence, it is sufficient to show that
\begin{equation}  \label{borelsecmap}
	u \mapsto (l^u(a) g_{n}(u)\mid g_{m}(u))_{u} 
\end{equation}
is Borel for all $m$ and $n$,  \cite[Proposition 1]{DixVN}. 		
First we show that (\ref{borelsecmap}) is Borel for functions in the dense 
subspace $C_{c}(G)$ of $C^*(G)$, and then 
for an arbitrary $a\in C^*(G)$. Let $h\in C_{c}(G)$ 
and fix  $m$ and $n$. Then
\begin{eqnarray*}
	(l^{u}(h) g_{n}(u)\mid g_{m}(u))_{u} 
	&=& \int_{\gudu}{[f_{m}^{*}*(l^{u}(h) f_{n})](t)
	 \Delta_{u}(t)^{-\frac{1}{2}}d\beta^{u}(t)}  \\
	&=& \int_{\gudu}{[f_{m}^{*}*(h*f_{n})](t)
	\Delta_{u}(t)^{-\frac{1}{2}}d\beta^{u}(t)}  \\
	&=& \int_{\gudu}{(f^{*}_{m}*h*f_{n})(t)
	 \Delta_{u}(t)^{-\frac{1}{2}}d\beta^{u}(t)}.  \\
\end{eqnarray*}
Since  $f^{*}_{m}*h*f_{n}\in C_{c}(G)$, it follows from 
Proposition \ref{Borelprop} that
\begin{equation} \label{firstBorel}
 u \mapsto \int_{\gudu}{(f^{*}_{m}*h*f_{n})(t)\Delta_{u}(t)^{-\frac{1}{2}}d\beta^{u}(t)}
\end{equation}
is Borel.
Now, let $a\in C^{*}(G)$ be arbitrary. 
Then there is a sequence $\{h_{i}\}$ in $C_{c}(G)$ such that  $h_i\to a$
in the $C^*$-norm.
Put $\phi_{i}(u)=(l^{u}(h_{i}) g_{n}(u)\mid g_{m}(u))_{u}$ and 
$\phi(u)= (l^{u}(a) g_{n}(u)\mid g_{m}(u))_{u}$. By 
(\ref{firstBorel}), $\{\phi_{i}\}$ is 
a sequence of Borel measurable functions on $\gus$ for all $m$ and $n$. 
Since inner products and the $l^u$'s are continuous, 
$\phi_{i}(u)\to \phi(u)$ for every $u\in\gus$. 
This pointwise convergence implies that $\phi$ is Borel measurable.
Hence the map (\ref{borelsecmap}) is Borel and 
it follows that $L(a)f\in \ca{B}(\bhb)$.
		
Next we show that $L(a)$ maps $L^2(\bhb,\mu)$ into $L^2(\bhb,\mu)$. 
That is, we show that for every $k\in L^2(\bhb,\mu)$ and $a\in C^{*}(G)$, that
the map $u\mapsto ||(L(a)k)(u)||_{u}^{2}$ is $\mu$-integrable. In this case
we have  
\begin{eqnarray}
 \int_{\gus}{||(L(a)k)(u)||_{u}^{2}\:d\mu(u)} &=& 
 \int_{\gus}{|| l^{u}(a)(k(u))||_{u}^{2}\:d\mu(u)}  \nonumber\\
 &\leq & ||a||^2 \int_{\gus}{|| k(u))||_{u}^{2} \:d\mu(u)}  \nonumber \\
 &=& ||a||^2||k||_2^2  \label{Lbounded}\\
 &<&\infty.  \nonumber
\end{eqnarray}
Thus the map $u\mapsto ||(L(a)k)(u)||_{u}^{2}$ is $\mu$-integrable, 
showing that  $L(a)k\in L^2(\bhb,\mu)$.
		
The boundedness of $L(a)$
follows from (\ref{Lbounded}), since 
$$||L(a)k||_2^2 = \int_{\gus}{||l^u(a) k(u)||^{2}_{u} \,\:  d\mu(u)} 
\leq ||a||^2||k||_2^2. $$
Hence $L(a)\in B(L^2(\bhb,\mu))$ for every $a\in C^*(G)$. 

Lastly, that $a\mapsto L(a)$ is a representation of $C^*(G)$ follows from 
(\ref{LaRep}) and the fact that every $l^u$ is a representation of $C^*(G)$. 
\end{proof}
	
The representation  $L$ of (\ref{LaRep}) is a 
\textit{direct integral representation}, \cite[Definition 8.1.3]{DixC},
and is denoted  by 
$$  L:=\int_{\gus}^{\oplus}{l^{u}\: d\mu(u)}. $$
The operators $L(a), \, a\in C^*(G),$ are called \textit{decomposable operators}, and 
are denoted by 
$$  L(a):=\int_{\gus}^{\oplus}{l^{u}(a)\: d\mu(u)}.$$

We need a few last remarks on von Neumann algebras and some lemmas which we use to prove a groupoid version of Effros' lemma for transformation groups.
	
If $\fH{}$ be a Hilbert space and $\mathcal{M}$ a self-adjoint subset of $B(\fH{})$, then we denote by $\mathcal{M}^{\prime}$ the \textit{commutant} of $\mathcal{M}$. We say 
$\mathcal{M}$ is a von Neumann algebra if $\mathcal{M}=\mathcal{M}^{\prime\prime}$.
The \textit{centre} of a von Neumann algebra $\mathcal{M}$ is the abelian von Neumann algebra
$$\mathcal{Z}(\mathcal{M}):=\mathcal{M}^{\prime}\cap \mathcal{M}.$$

	\begin{lem} \label{CentreInMaximal}
		Let $\mathcal{M}$ be a von Neumann algebra and suppose that $\mathcal{N}$ is a maximal abelian von Neumann subalgebra of $\mathcal{M}^{\prime}$, 
		in the sense that $\mathcal{N}$ is not properly contained in any other
		abelian von Neumann subalgebra of $\mathcal{M}^{\prime}$. Then 
		 $\mathcal{Z}(\mathcal{M}) \subset \mathcal{N}$.
	\end{lem}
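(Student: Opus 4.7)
The plan is to exploit the maximality of $\mathcal{N}$ by producing an abelian von Neumann subalgebra of $\mathcal{M}'$ that contains both $\mathcal{N}$ and $\mathcal{Z}(\mathcal{M})$, and then observing it must coincide with $\mathcal{N}$.

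First I would verify that $\mathcal{Z}(\mathcal{M})$ is an abelian von Neumann subalgebra of $\mathcal{M}^{\prime}$. By definition $\mathcal{Z}(\mathcal{M})=\mathcal{M}\cap\mathcal{M}^{\prime}$, which is the intersection of two von Neumann algebras and hence a von Neumann algebra in its own right; since every element of $\mathcal{Z}(\mathcal{M})$ lies in $\mathcal{M}^{\prime}$ and simultaneously in $\mathcal{M}$, any two such elements commute, so $\mathcal{Z}(\mathcal{M})$ is abelian. Trivially $\mathcal{Z}(\mathcal{M})\subset\mathcal{M}^{\prime}$.

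Next I would observe that every $z\in \mathcal{Z}(\mathcal{M})$ commutes with every $n\in\mathcal{N}$. This is because $\mathcal{N}\subset \mathcal{M}^{\prime}$, while $z\in\mathcal{M}$, so $zn=nz$ follows from $z$ commuting with every element of $\mathcal{M}^{\prime}$. Let $\mathcal{A}$ denote the von Neumann algebra generated by $\mathcal{N}\cup\mathcal{Z}(\mathcal{M})$ inside $B(\fH{})$. Since both $\mathcal{N}$ and $\mathcal{Z}(\mathcal{M})$ are self-adjoint subsets of $\mathcal{M}^{\prime}$ whose elements pairwise commute, the $*$-algebra they generate is abelian, and its double commutant $\mathcal{A}$ is therefore also abelian (the closure of a commuting family of normal operators in the weak operator topology remains commuting). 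Moreover $\mathcal{A}\subset (\mathcal{M}^{\prime})^{\prime\prime}=\mathcal{M}^{\prime}$, so $\mathcal{A}$ is an abelian von Neumann subalgebra of $\mathcal{M}^{\prime}$ containing $\mathcal{N}$.

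Finally I would invoke the maximality hypothesis. Since $\mathcal{N}\subset\mathcal{A}$ and $\mathcal{A}$ is an abelian von Neumann subalgebra of $\mathcal{M}^{\prime}$, maximality of $\mathcal{N}$ forces $\mathcal{A}=\mathcal{N}$. In particular $\mathcal{Z}(\mathcal{M})\subset\mathcal{A}=\mathcal{N}$, which is the desired inclusion. The only subtlety worth double-checking is that the von Neumann algebra generated by two commuting abelian von Neumann subalgebras is again abelian; this is the one step where one must be careful, but it follows directly from the fact that weak-operator limits preserve commutation relations.
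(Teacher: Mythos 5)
Your argument is correct and follows essentially the same route as the paper: both form the (necessarily abelian) von Neumann algebra generated by $\mathcal{N}$ and $\mathcal{Z}(\mathcal{M})$ inside $\mathcal{M}^{\prime}$ and then invoke maximality of $\mathcal{N}$. The one step you flag — that the closure of a commuting $*$-algebra stays commuting — is exactly where the paper spends its effort, using the \emph{separate} strong-operator continuity of multiplication in a two-stage limiting argument (joint continuity fails, so the blanket phrase ``weak-operator limits preserve commutation'' should be unpacked this way).
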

	
	 \begin{proof}
		The structure of the proof is as follows: we first show that the von Neumann 
		algebra $\overline{K}^{sot}$ generated by 
		$\mathcal{Z}(\mathcal{M})$ and $\mathcal{N}$ is 
		a von Neumann subalgebra of $\mathcal{M}^{\prime}$. Then we show that
		$\overline{K}^{sot}$ is an abelian von Neumann algebra. 
		Lastly, we show that $\overline{K}^{sot}$ 
		contains both $\mathcal{Z}(\mathcal{M})$ and $\mathcal{N}$.
		By the maximality of  $\mathcal{N}$, we then have 
		$\mathcal{Z}(\mathcal{M})\subset \mathcal{N}$.
		
		Suppose that $z\in \mathcal{Z}(\mathcal{M})$ and $n\in \mathcal{N}$. Then 
		$z\in \mathcal{M}$ and $n\in \mathcal{M}^{\prime}$. Thus 
		$zn=nz$. Hence any product formed from elements of $ \mathcal{Z}(\mathcal{M})$ and 
		$\mathcal{N}$ are of the form $zn$, with 
		$z\in \mathcal{Z}(\mathcal{M})$ and $n\in \mathcal{N}$. 
		Let $K= \Span \{zn: z\in \mathcal{Z}(\mathcal{M}), n\in \mathcal{N}\}$. Then $K$ is 
		an abelian *-subalgebra of $\mathcal{M}^{\prime}$, since $\mathcal{M}^{\prime}$
		is itself a von Neumann algebra and 
		contains both $\mathcal{Z}(\mathcal{M})$ and $\mathcal{N}$. 
		Then von Neumann's Double Commutant Theorem,
		\cite[Theorem 2.4.11]{BratteliRobinson}, implies the strong 
		operator closure $\overline{K}^{sot}$ 
		of $K$ is a von Neumann subalgebra of $\mathcal{M}^{\prime}$. Also,
		$\overline{K}^{sot}$  contains both
		$\mathcal{Z}(\mathcal{M})$ and $\mathcal{N}$.
	
We claim that $\overline{K}^{sot}$ is abelian.  First suppose that $S\in K$ and $T \in \overline{K}^{sot}$. Let $(T_{\alpha})$ be a net in $K$ converging to $T$ in the strong operator topology. The maps $T\to ST$ and $T\to TS$ are continuous in the strong operator topology (for the fixed $S$). Thus $T_{\alpha}S \to TS$. Since $K$ is an abelian *-algebra we also have that $T_{\alpha}S=ST_{\alpha} \to ST$. 
		Since the strong operator topology is a Hausdorff, it follows that 
			\begin{equation} \label{commute}
				ST=TS.
			\end{equation}
		Now let $S,T\in \overline{K}^{sot}$, and let $\{S_{\alpha}\}$ be a sequence in $K$ such that $S_{\alpha}\to S$
		in the strong operator topology. Applying Equation (\ref{commute}), we have that 
		$S_{\alpha}T=TS_{\alpha}$ for every $\alpha$. 
		Taking the limit now shows that $ST=TS$.
%
		
		We showed that $\overline{K}^{sot}$ is an abelian von Neumann subalgebra of 
		$\mathcal{M}^{\prime}$ which 
		contains both $\mathcal{Z}(\mathcal{M})$ and $\mathcal{N}$. But, $\mathcal{N}$ is 
		maximal abelian in $\mathcal{M}^{\prime}$. Thus $\overline{K}^{sot}=\mathcal{N}$ and  
		$\mathcal{Z}(\mathcal{M})\subset \mathcal{N}$.

%
	 \end{proof}

\begin{dfn} \label{projmeas}
Let $\mathscr{B}(\gus)$ be the Borel subsets of $\gus$. A 
\textit{projection-valued measure} $E$ for $L^2(\bhb,\mu)$ 
is a function from 
$\mathscr{B}(\gus)$ into the set of orthogonal projections on $L^2(\bhb,\mu)$
such that 
 \begin{enumerate}
	\item[(i)] $E(\gus)=1$  
	\item[(ii)] $E(A\cap B)=E(A)E(B)$, for $A,B\in \mathscr{B}(\gus)$, and  
	\item[(iii)] $E(\cup A_{i})=\sum{E(A_{i})}$ for   
	pairwise disjoint Borel subsets $A_{i}$. 
 \end{enumerate}
For $A\subset\gus$ let $1_{A}$ denote the characteristic function on  
$A$. Then, for every $f\in L^2(\bhb,\mu)$, 
\begin{equation} \label{eq:ProjMeas}
(E_{A}f)(u):=1_{A}(u)f(u)
\end{equation}
defines a projection-valued measure called
the \textit{canonical projection-valued measure}. Note: for a fixed $A\subset\gus$,
the projection $E_{A}$ is the decomposable operator
$E_{A}=\int_{\gus}^{\oplus}{1_{A}(u)\:I_{u}\:d\mu(u)}.$ 
\end{dfn}

Applying \cite[Corollary IV.12]{Fabec} to our specific direct integral representation
$ L=\int_{\gus}^{\oplus}{l^{u}d\mu(u)}$ of $C^*(G)$ 
gives:
			
\begin{prop} \cite[Corollary IV.12]{Fabec}  \label{projmeas}
The representations $l^{u}$ in the direct integral representation 
$L=\int_{\gus}^{\oplus}{l^{u}d\mu(u)}$ are $\mu$-almost all irreducible 
if and only if the range of the canonical projection valued measure 
for $L^2(\bhb,\mu)$ is a maximal abelian algebra of projections in $L(C^{*}(G))'$. 
\end{prop}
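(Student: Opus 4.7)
The plan is to invoke \cite[Corollary IV.12]{Fabec} more or less directly; the bulk of the work is checking that the preceding sections have assembled every ingredient that corollary requires. Specifically, Fabec's corollary is a general statement about a direct integral of representations of a $C^*$-algebra $A$ over an analytic Borel space $X$ equipped with a finite measure $\mu$ and a Borel Hilbert bundle $X*\fH{}$. I would first check each of these items against what has already been built: $\gus$ is an analytic Borel space (since it is second countable, locally compact Hausdorff, hence Polish); $\bhb$ is a Borel Hilbert bundle with fundamental sequence $\{g_{i}\}$ by Proposition \ref{bhbprop}; the quasi-invariant measure $\mu$ has been normalized to be a probability measure; and $L=\int_{\gus}^{\oplus} l^{u}\, d\mu(u)$ is a bona fide representation of $C^{*}(G)$ on $L^2(\bhb,\mu)$ by Proposition \ref{Lrep}. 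Once this alignment is in place, Fabec's corollary delivers the biconditional word for word.

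For the forward implication, I would record the following reasoning (which is the content of Fabec's proof specialized to our situation). If $l^{u}$ is irreducible for $\mu$-a.e.\ $u$, then $l^{u}(C^{*}(G))' = \mathbb{C}\, I_{u}$ almost everywhere, so any decomposable operator in $L(C^{*}(G))'$ is of the form $\int_{\gus}^{\oplus} c(u) I_{u}\, d\mu(u)$ for some $c \in L^{\infty}(\gus,\mu)$. That algebra of diagonal scalar multiplications is exactly the von Neumann algebra generated by the range $\{E_{A} : A \in \mathscr{B}(\gus)\}$ of the canonical projection-valued measure, and it is well known to be maximal abelian on $L^2(\bhb,\mu)$. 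Together with Lemma \ref{CentreInMaximal}, this forces the range of $E$ to be maximal abelian in $L(C^{*}(G))'$.

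For the converse I would argue contrapositively. If $l^{u}$ fails to be irreducible on a Borel set $B\subset \gus$ of positive measure, then by a measurable selection argument one can choose, $\mu$-measurably over $B$, a non-scalar projection $P_{u} \in l^{u}(C^{*}(G))'$; extending by $0$ off $B$ and integrating yields a decomposable projection $P \in L(C^{*}(G))'$ that does not lie in the multiplication algebra generated by the $E_{A}$. This contradicts maximality and closes the iff.

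The main obstacle, and essentially the only nontrivial point in the proof, is the measurable selection step just mentioned: producing a decomposable non-scalar element of $L(C^{*}(G))'$ from pointwise reducibility requires the Borel structure on $\bhb$ to be rich enough to support such a selection. This is exactly the role played by the analytic Borel hypothesis on $\gus$ and the fundamental sequence $\{g_{i}\}$ supplied by Proposition \ref{bhbprop}, which is why the earlier verification of Fabec's hypotheses is the real content of the argument.
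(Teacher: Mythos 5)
Your proposal matches the paper exactly in approach: the paper offers no proof at all beyond the citation, simply stating that applying \cite[Corollary IV.12]{Fabec} to the direct integral $L=\int_{\gus}^{\oplus} l^{u}\,d\mu(u)$ yields the proposition, with the hypotheses (analytic Borel space, Borel Hilbert bundle, probability measure, decomposable representation) already secured by the preceding results just as you verify. Your additional sketch of Fabec's internal argument is extra material the paper does not include, but the route taken is the same.
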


 A set $A\subset\gus$ is \textit{invariant} if $r(s^{-1}(A))=A$. 
 If $\mu$ is a quasi-invariant measure on $\gus$, then $\mu$ is 
 \textit{ergodic} if $\mu(A)=0$ or $\mu(\gus\backslash A)=0$ for all
 invariant sets $A\subset\gus$. If, in addition,
 $\mu$ is concentrated on an orbit, then $\mu$ is \textit{trivially ergodic}.	
	
\begin{lem} \label{analyticmeas}
	Let $A\subset \gus$ be a Borel set and $\mu$ a 
	Borel measure on $\gus$. Then $r(s^{-1}(A))$ is measurable.
\end{lem}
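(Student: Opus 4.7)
My plan is to show that $r(s^{-1}(A))$ is an \emph{analytic} subset of $\gus$, and then invoke the standard fact that analytic subsets of a Polish space are universally measurable, hence measurable with respect to any Borel measure $\mu$.

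First I would set up the Polish/analytic framework. Since $G$ is second-countable, locally compact and Hausdorff, it is Polish (so for instance one can appeal to \cite[Lemma 6.5]{CrossedProd}, the same fact the author already used for $\gus$ in the proof of Proposition \ref{bhbprop}); likewise $\gus$ is Polish, and hence is an analytic Borel space. This is the right setting in which to apply the classical results of descriptive set theory about analytic sets.

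Next I would carry out the two-step image argument. Because $s:G\to\gus$ is continuous, the preimage $s^{-1}(A)$ is a Borel subset of the Polish space $G$; in particular it is analytic (every Borel subset of a Polish space is analytic, witnessed by the identity map). Now $r:G\to\gus$ is continuous, and the continuous image of an analytic set is analytic, so $r(s^{-1}(A))$ is an analytic subset of $\gus$. At this point the only remaining issue is to pass from ``analytic'' to ``$\mu$-measurable''.

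The concluding step, which is the only real content, is to invoke the theorem that every analytic subset of a Polish space is universally measurable — that is, measurable with respect to the completion of every $\sigma$-finite Borel measure. Since every Borel measure on the second-countable locally compact Hausdorff space $\gus$ is $\sigma$-finite, this gives the $\mu$-measurability of $r(s^{-1}(A))$. I expect this to be the main ``obstacle'' only in the sense that one must cite the right theorem; one standard reference is Theorem 4.3.1 of Srivastava's \emph{A Course on Borel Sets} (or Appendix~A of \cite{CrossedProd}), which states precisely this universal-measurability property. No further groupoid-specific structure is needed — the proof uses only continuity of $r$ and $s$ together with the topological hypotheses on $G$.
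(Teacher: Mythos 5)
Your proposal is correct and follows exactly the same route as the paper: $s^{-1}(A)$ is Borel hence analytic, its continuous image under $r$ is analytic, and analytic sets are universally (hence $\mu$-) measurable; the paper simply cites Arveson's \emph{An invitation to C*-algebras} (Theorems 3.3.4 and 3.2.4) where you cite Srivastava. No substantive difference.
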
		
\begin{proof}
The continuity of $r$ and $s$ imply that they are Borel measurable. Hence $s^{-1}(A)$ is 
a Borel subset of $G$ and thus analytic. Then $r(s^{-1}(A))$ is an analytic set 
in $\gus$ by \cite[Theorem 3.3.4 and Corollary 1]{ArvInvitation}.
Hence $$r(s^{-1}(A))$$ is $\mu$-measurable by \cite[Theorem 3.2.4]{ArvInvitation}.  
\end{proof}

Suppose that $\pi$ is a representation of
a $C^*$-algebra $\ca{A}$ on a Hilbert space $\fH{}$.
Then $\pi$ is a \textit{factor representation} if the centre 
of the von Neumann algebra $\pi(\mathcal{A})^{\prime \prime}$ 
consists of scalar multiples
of the identity operator on $\fH{}$. 
If the 	the von Neumann algebra $\pi(\mathcal{A})''$ is a type I von Neumann
algebra then $\pi$ is a \textit{type I representation}.
The $C^*$-algebra $\mathcal{A}$ is type I  if every 
representation of $\mathcal{A}$ is type I.

We now  adapt Effros' proof of 
\cite[Lemmma 4.2]{Eff65} from transformation groups to groupoids.
We only need one direction 
of Effros' `if and only if' statement, that is, type I implies trivially ergodic.

\begin{prop} \label{factorrep}
	Let $\mu$ be an ergodic Borel measure on $\gus$. Then 
\begin{enumerate}
	 \item[(i)] the direct integral representation 
	   $ L=\int_{\gus}^{\oplus}{l^{u}d\mu(u)}$ is a factor representation
	   of $C^*(G)$, and
	 \item[(ii)] if $L$ is a type I factor representation, then
	   $\mu$ is trivially ergodic.
\end{enumerate}
\end{prop}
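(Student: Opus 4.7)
Plan. Both parts begin from the following common reduction. Since each $l^u$ is irreducible, Proposition \ref{projmeas} says the range of the canonical projection-valued measure $E$ is a maximal abelian algebra of projections in $L(C^*(G))'$; hence the von Neumann algebra $\mathcal{N}$ that it generates is maximal abelian in $L(C^*(G))'$. Applying Lemma \ref{CentreInMaximal} with $\mathcal{M}:=L(C^*(G))''$ gives
$$\mathcal{Z}(L(C^*(G))'')\subset\mathcal{N},$$
so every projection in the centre has the form $E_A$ for some Borel $A\subset\gus$.

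For (i), the plan is to show that every central projection $E_A$ is trivial; because a von Neumann algebra is generated by its projections, this yields $\mathcal{Z}(L(C^*(G))'')=\mathbb{C}I$ and hence that $L$ is a factor representation. The key intermediate step is to prove that $E_A$ being central implies $A$ is $\mu$-essentially invariant under the orbit equivalence relation $\sim$; ergodicity of $\mu$ then forces $\mu(A)=0$ or $\mu(\gus\setminus A)=0$, so $E_A\in\{0,I\}$. I expect this essential-invariance step to be the heart of the argument, and I would follow Effros' transformation-group strategy. The idea is that for each $\gamma\in G$ with $r(\gamma)=u$ and $s(\gamma)=v$, right-translation by $\gamma$ furnishes a unitary $U_\gamma:\fH{u}\to\fH{v}$ intertwining $l^u$ and $l^v$; these assemble into operators on $L^2(\bhb,\mu)$ lying in $L(C^*(G))'$, and the requirement that $E_A$ commute with them forces $s^{-1}(A)\triangle r^{-1}(A)$ to be null for the induced measure $\nu:=\int\lambda^u\,d\mu(u)$ on $G$, which is precisely essential invariance.

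For (ii), suppose in addition that $L$ is a type I factor representation. Then $L(C^*(G))''\cong B(\mathcal{K})$ for some separable Hilbert space $\mathcal{K}$, and its action on $L^2(\bhb,\mu)$ is a multiple of the identity representation, so $L$ itself is unitarily equivalent to a multiple of a single irreducible representation $\pi$ of $C^*(G)$. Matching this against the direct-integral decomposition $L=\int^{\oplus}_{\gus}l^u\,d\mu(u)$ of irreducibles and using essential uniqueness of such decompositions, one deduces $l^u\cong\pi$ for $\mu$-almost every $u$. Because unitary equivalence of the induced representations $l^u$ and $l^v$ forces $u\sim v$, the set $\{u:l^u\cong\pi\}$ must lie inside a single orbit $[u_0]$, which therefore carries full $\mu$-measure. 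Thus $\mu$ is concentrated on $[u_0]$ and is trivially ergodic. Beyond the invariance step in (i), the principal remaining obstacle is a careful verification, using properties of the representations induced from the stabilizer subgroupoids, that equivalence of the $l^u$ does indeed reflect orbit equivalence of the units.
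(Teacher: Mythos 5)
Your common reduction (central projections of $L(C^*(G))''$ lie in the range of the canonical projection-valued measure, via Proposition \ref{projmeas} and Lemma \ref{CentreInMaximal}) and your part (ii) both agree with the paper; the paper's (ii) quotes Mackey to get that the $l^u$ are mutually unitarily equivalent $\mu$-a.e.\ and then uses the injectivity of $[u]\mapsto[l^u]$ (Clark's Lemma 5.5), which is exactly the ingredient you flag at the end. The problem is the step you yourself call the heart of part (i): deducing essential invariance of $A$ from centrality of $E_A$ by ``assembling'' the fibrewise intertwiners $U_\gamma$ between $\fH{r(\gamma)}$ and $\fH{s(\gamma)}$ into unitaries on $L^2(\bhb,\mu)$ lying in $L(C^*(G))'$. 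This is precisely where Effros' transformation-group argument fails to transfer. For a transformation group each group element acts globally on the unit space, so one gets one commuting unitary $W_t$ per group element and a countable dense subgroup suffices. For a groupoid a single $\gamma\in G$ carries no global transformation of $\gus$: to build even one operator of the kind you describe you would need a Borel family $u\mapsto\gamma_u$ whose range--source pairs form the graph of a $\mu$-nonsingular Borel automorphism of $\gus$ (a Borel bisection), a Radon--Nikod\'ym correction to make it unitary, and then countably many such bisections generating the orbit equivalence relation $\mu$-a.e. None of this is supplied, and it is serious measured-groupoid machinery rather than a routine verification; as written, part (i) has a genuine gap.

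The paper avoids the issue entirely by proving the contrapositive of (i) with an approximation argument. If $L$ is not a factor, the reduction gives a nontrivial central projection $P=E_B$; since $P$ lies in $L(C^*(G))''$, Kaplansky's density theorem together with metrizability of the unit ball in the strong operator topology produces a sequence $a_n\in C^*(G)$ with $L(a_n)\to E_B$ strongly, and hence a subsequence with $l^u(a_{n_k})\to 1_B(u)I_u$ strongly for all $u$ outside a $\mu$-null set $N$. Invariance is then extracted \emph{pointwise}: if $w\sim v$ with $v\in B\setminus N$ and $w\notin N$, the single unitary equivalence $l^w\cong l^v$ (Clark's Lemmas 5.1 and 5.5) transports the limit and forces $1_B(w)=1$, so the saturation $C=r(s^{-1}(B\setminus N))$ is invariant, measurable, and satisfies $\mu(C)=\mu(B)$, witnessing non-ergodicity. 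No globally defined intertwining operators are ever needed. To repair your proof of (i), either import this Kaplansky-density/subsequence argument or actually construct the bisection unitaries; the latter is the harder road.
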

	
We give a brief overview of the structure of the proof of (i) in an attempt to 
make the proof easier to read. We prove the contrapositive of (i), 
and split the proof into three main 
parts. \textit{Existence of a direct integral projection:} We show that if $L$ is not a 
factor representation, then there is a projection $P$ and a Borel set 
$B\in\mathscr{B}(\gus)$ such that $P=\int_{\gus}{1_{B}(u)I_{u}d\mu(u)}$ with 
$P\neq I$ and $P\neq 0$. 
\textit{Convergence of integrands off a null set $N$:} We show that there is 
sequence $\{a_{n}\}\subset C^{*}(G)$ and a null set $N\in\mathscr{B}(\gus)$
such that $l^{u}(a_{n_{k}}) \to 1_{B}(u)I_{u}$ strongly 
for every $u\in \gus\backslash N$.
\textit{An invariant set neither null nor conull:} Lastly, we show that 
$C:=r(s^{-1}(B\backslash N))$ is invariant, and is neither 
null nor conull under $\mu$. That is, $\mu$ is not ergodic.
\begin{proof}
(i) \textit{Existence of a direct integral projection:} 
 Suppose that $L$ is not a  factor representation of $C^*(G)$.  Then the centre 
 $L(C^{*}(G))^{\prime}\cap L(C^{*}(G))^{\prime \prime}$ has
 elements other than multiples of the identity.
 Since von Neumann algebras are generated by their projections 
 (\cite[Theorem 4.1.11]{Murphy}) and the centre is an 
 abelian von Neumann algebra, there exists a projection  
$P\in L(C^{*}(G))^{\prime}\cap L(C^{*}(G))^{\prime \prime}$ with $P\neq 0$ and 
$P\neq I$ ($I$ being the identity operator in $B(L^2(\bhb,\mu))$).
Let $E$ denote the canonical projection-valued measure from $\mathscr{B}(\gus)$
into the set of orthogonal projections in 
$B(L^2(\bhb,\mu))$. 
Since every $l^{u}$ is irreducible, it follows 
from Proposition \ref{projmeas} that the range $E(\mathscr{B}(\gus))$
of the canonical projection-valued 	measure 
is a maximal abelian
algebra in $L(C^{*}(G))^{\prime}$.
Thus the set of projections contained
in the image of the canonical projection-valued measure generates 
a maximal abelian von Neumann subalgebra 
$E(\mathscr{B}(\gus))^{\prime \prime}$ of $L(C^{*}(G))^{\prime}$. 
Then $L(C^{*}(G))^{\prime}\cap L(C^{*}(G))^{\prime \prime}\subset
E(\mathscr{B}(\gus))^{\prime \prime}$, by  Lemma \ref{CentreInMaximal}. Hence
$P\in E(\mathscr{B}(\gus))^{\prime \prime}$. 
Because $E(\mathscr{B}(\gus))^{\prime \prime}$ is generated by its 
projections 
$E(\mathscr{B}(\gus))$, it follows 
that $P\in E(\mathscr{B}(\gus))$. Hence there is a
Borel set $B$ in $\mathscr{B}(\gus)$ such that $0\neq \mu(B) \neq \mu(\gus)$
and 
$P=E_{B}=\int_{\gus}{1_{B}(u)I_{u}d\mu(u)}$ 
(where $1_B$ is the indicator function of $B$).
	
\textit{Convergence of integrands off a null set $N$:}
By \cite[Lemmma 3.1]{Eff63} $L$ is a non-degenerate representation because 
each $l^{u}$ is non-degenerate.
By von Neumann's double commutant theorem, 
\cite[Theorem 2.4.11]{BratteliRobinson}, 
$L(C^{*}(G))$ is dense in $L(C^{*}(G))^{\prime \prime}$ in the strong operator
topology. By Kaplansky's density theorem the unit ball of 
$L(C^{*}(G))$ is strongly dense in the unit ball of $L(C^{*}(G))^{\prime \prime}$,
\cite[Theorem 5.3.5]{KadisonRingrose1}. With Kaplansky's density theorem and 
because the unit ball 
of $L(C^{*}(G))^{\prime \prime}$ is metrizable				
in the strong operator topology, 
\cite[Proposition 1]{DixVN}, we may replace nets with sequences in the 
strong closure of the unit ball of $L(C^{*}(G))$. Thus, since $||E_{B}||=1$, there is 
a sequence $\{a_{n}\}\subset C^{*}(G)$ such that $L(a_{n})$ is in the unit ball of 
$L(C^*(G))^{\prime \prime}$, and 
$$ L(a_{n})=\int_{\gus}^{\oplus}{l^{u}(a_{n})\: d\mu(u)}
 \to E_{B}=\int_{\gus}{1_{B}(u)I_{u}\: d\mu(u)}$$
in the strong operator topology.
Now, due to the strong convergence of the sequence of direct integrals above, 
 \cite[Proposition 4 of Part II, Chapter 2]{DixVN} tells us there is a 
subsequence $a_{n_{k}}$ such that for $\mu$-a.e. $u\in\gus$
\begin{equation}\label{repsconv} 
l^{u}(a_{n_{k}}) \to 1_{B}(u)I_{u}    
\end{equation}
strongly. That is, there is a 
Borel set $N\subset \gus$ such that $\mu(N)=0$ and 
$l^{u}(a_{n_{k}}) \to 1_{B}(u)I_{u}$ strongly for every $u\in \gus\backslash N$.
			
\textit{An invariant set neither null nor conull:}
Let $C:=r(s^{-1}(B\backslash N))$. Then $C$ is an invariant subset of $\gus$. 
Moreover $C$ is measurable by Lemma \ref{analyticmeas}. It will suffice to show  
that $\mu(C)=\mu(B)$, because  then $\mu(C)\neq 0$ and 
$\mu(\gus \backslash C)=\mu(\gus) -\mu(C)= \mu(\gus) -\mu(B)\neq 0$, which shows that 
$\mu$ is not ergodic. Note, since $B\backslash N \subset C$, it follows that
$\mu(B\backslash N)\leq \mu(C)$. Then, since 
$\mu(N)=0$, we get 
\begin{equation}\label{eq:MeasEstimate}
\mu(B\backslash N)=\mu(B)-\mu(N)=\mu(B).
\end{equation}
Thus $\mu(B)\leq \mu(C).$ 
Similarly, to get the 
reverse inequality we show that $C\backslash N \subset B$. Suppose that
$w\in C\backslash N$.
Since $C=\{u\in\gus:u\in[v] \text{ and } v\in B\backslash N\}$, there is
a $v\in B\backslash N$ such that 
$v$ is equivalent to $w$.  Lemma 5.1 of \cite{Cla07} shows that the 
map $[u] \mapsto [l^u]$ from the orbit space $\gus/G$ into
the spectrum $C^*(G)^{\wedge}$ is well-defined. Lemma 5.5 of  \cite{Cla07}
shows that this map  $[u] \mapsto [l^u]$ is injective. Hence
$v$ is equivalent to $w$ if 
and only if $l^v$ is unitarily equivalent to $l^w$. So since $v\sim w$ there is 
a unitary operator $U:\fH{v} \to \fH{w}$ such that $l^{w}(a)=Ul^{v}(a)U^{*}$,
 for every 
$a\in C^{*}(G)$. Since $w\notin N$, we apply (\ref{repsconv}) to $l^w$, that is, 
$$l^{w}(a_{n_{k}}) \to 1_{B}(w)I_{w}.$$ 
Since $v\in B\backslash N$, it 
follows that $1_B(v)=1$, and we can also apply (\ref{repsconv}) to $l^v$. Then 
$$l^{w}(a_{n_{k}})=Ul^{v}(a_{n_{k}})U^{*} \to U1_{B}(v)I_{v}U^{*}=UI_{v}U^{*}=I_{w}.$$ 
Limits are unique in the strong operator topology. Thus $1_{B}(w)I_{w}=I_{w}$,
which implies that $1_{B}(w)=1$. Hence $w\in B$. That is, $C\backslash N \subset B$.
Now a similar computation to (\ref{eq:MeasEstimate}) shows that 
$\mu(C)\leq \mu(B).$ Hence $\mu(C)=\mu(B)$, proving (i).
	
(ii) Suppose that $L$ is a factor representation of type I. Then by
 \cite[Theorem 2.7]{MackeyIndRep2},
almost all $l^{u}$ are unitarily equivalent.
That is, there is a conull set $A\subset \gus$ 
	such that $u\sim v$ for all $u,v\in A$. Suppose that $u\in A$. Then 
	$A\subset \{w\in\gus:w\sim u\}$ and $\mu(\{w\in\gus:w\sim u\})\neq0$. 
	On the other hand, $\mu(\{v\in\gus:v\nsim u\})=0$. Thus 
\begin{eqnarray*}
\mu(\gus)&=&  \mu(\{w\in\gus:w\sim u\} \cup (\gus\backslash 
				\{w\in\gus:w\sim u\}))    \\
		 &=&  \mu(\{w\in\gus:w\sim u\}) + 
		 	\mu(\{v\in\gus:v\nsim u\})  \\
		 &=&  \mu(\{w\in\gus:w\sim u\}).
\end{eqnarray*}
Hence $\mu$ is concentrated on an orbit, and is thus trivially ergodic.			 
\end{proof}

\section{CHARACTERIZING GCR GROUPOID $C^*$-ALGEBRAS} \label{main}
After one last lemma we prove Theorem \ref{mainGCRthm} which says that if $C^*(G)$ is type I (or equivalently GCR) then $\gus/G$ is $T_{0}$.
	
	\begin{lem} \label{Fsigma}
	Let $G$ be a second-countable, locally compact and Hausdorff \\ groupoid.
	Let $R: G\to \gus\times\gus$, defined by $R(\gamma):=(r(\gamma),s(\gamma))$, be the 
	equivalence relation induced on $\gus$. Then $R(G)$ is an
	$F_{\sigma}$ subset in $\gus\times \gus$.
	\end{lem}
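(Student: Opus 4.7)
The plan is to write $G$ as a countable union of compact sets and then apply the continuous map $R$. Since $G$ is second-countable, locally compact and Hausdorff, it is $\sigma$-compact: choose a countable basis $\{V_n\}$ of open sets with compact closure, and set $K_n := \overline{V_n}$, so that $G = \bigcup_{n\in\mathbb{N}} K_n$ with each $K_n$ compact.

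Next I would observe that $R$ is continuous, being the map $\gamma \mapsto (r(\gamma), s(\gamma))$ where both $r$ and $s$ are continuous. Hence each $R(K_n)$ is a compact subset of $\gus \times \gus$. Since $\gus$ is Hausdorff (as a subspace of the Hausdorff space $G$), the product $\gus \times \gus$ is Hausdorff, so each compact subset $R(K_n)$ is closed.

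Finally, $R(G) = R\bigl(\bigcup_n K_n\bigr) = \bigcup_n R(K_n)$ is a countable union of closed sets in $\gus \times \gus$, which is precisely the definition of an $F_\sigma$ set. No step here looks delicate; the only thing worth being careful about is producing the $\sigma$-compact exhaustion, and this is a standard consequence of second countability plus local compactness. There is no real obstacle to overcome.
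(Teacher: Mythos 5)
Your proof is correct and follows essentially the same route as the paper: exhaust $G$ by countably many compact closures of basic open sets, note $R$ is continuous so the images are compact, hence closed in the Hausdorff space $\gus\times\gus$, and conclude $R(G)$ is a countable union of closed sets. If anything, your version is slightly cleaner in observing directly that $R(K_n)$ is compact rather than passing through $r(\overline{U_i})$ and $s(\overline{U_i})$ separately.
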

	
		\begin{proof}
		  Since $G$ is second-countable and locally compact we can express $G$ 
		  in form $G=\cup_{i=1}^{\infty}U_{i}$, where each 
			$U_{i}$ is a  neighborhood with 
		  compact closure. Since  
			the range and source maps are continuous and $\gus$ is Hausdorff, it 
			follows that
		  $r(\overline{U_{i}})$ and $s(\overline{U_{i}})$ 
		  are compact for every $i$, and thus closed in $\gus\times \gus$. 
			So $R(G)=R(\cup_{i=1}^{\infty}\overline{U}_{i})=
			\cup_{i=1}^{\infty}R(\overline{U}_{i})$
			is an $F_{\sigma}$ set in $\gus \times \gus$.
	 \end{proof}

	\begin{thm} \label{mainGCRthm}
		Let $G$ be a second-countable locally compact and Hausdorff \\ groupoid
		with a Haar system. If $C^*(G)$ is type I 
		then $\gus/G$ is $T_{0}$.
	\end{thm}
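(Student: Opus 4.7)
The plan is to prove the contrapositive: assume $\gus/G$ is not $T_0$ and deduce that $C^*(G)$ is not type I. All the machinery developed in Sections~\ref{measuresongroupoids} and~\ref{directint} is set up precisely to make this work; the strategy mirrors the outline given in the introduction.

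First I would use Lemma~\ref{Fsigma} to note that the induced equivalence relation $R(G)$ is $F_\sigma$ in $\gus\times\gus$, hence analytic. This is exactly the regularity hypothesis needed to invoke Ramsay's Mackey--Glimm dichotomy \cite[Theorem 2.1]{Ram90}: if the orbit space $\gus/G$ fails to be $T_0$, then the equivalence relation is not smooth (not countably separated), and so there exists a quasi-invariant probability measure $\mu$ on $\gus$ that is ergodic but not concentrated on any single orbit, i.e.\ $\mu$ is ergodic but not trivially ergodic. Since all Borel measures in our second-countable setting are $\sigma$-finite, we may indeed normalize to a probability measure without changing the quasi-invariance or ergodicity class.

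Next I would form the Borel Hilbert bundle $\bhb$ from Proposition~\ref{bhbprop} and build the direct integral representation
$$L=\int_{\gus}^{\oplus}l^{u}\,d\mu(u)$$
of $C^*(G)$ from Proposition~\ref{Lrep}. By Proposition~\ref{factorrep}(i), the ergodicity of $\mu$ forces $L$ to be a factor representation of $C^*(G)$. Now assume for contradiction that $C^*(G)$ is type I; then every representation of $C^*(G)$ is type I, in particular $L$ is a type I factor representation. But then Proposition~\ref{factorrep}(ii) forces $\mu$ to be trivially ergodic, i.e.\ concentrated on a single orbit, contradicting the choice of $\mu$ supplied by Ramsay's dichotomy. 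Hence $C^*(G)$ cannot be type I, proving the contrapositive.

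The main obstacle is not really technical at this point; it is conceptual, namely identifying exactly what ``not $T_0$'' buys us. Lemma~\ref{Fsigma} is what legitimizes the appeal to Ramsay's theorem (he needs the equivalence relation to be analytic), and Proposition~\ref{factorrep} is where all the groupoid-flavoured work of adapting Effros' lemma pays off. Once these two pieces are lined up, the argument is a short chase: non-$T_0$ orbit space $\Rightarrow$ non-trivially ergodic $\mu$ $\Rightarrow$ $L$ is a factor $\Rightarrow$ (if $C^*(G)$ were type I) $L$ would be type I $\Rightarrow$ $\mu$ trivially ergodic, contradiction. The equivalence of GCR and type~I \cite{Kap51,Gli61,Sak66} then yields the theorem as stated.
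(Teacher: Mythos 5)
Your proposal is correct and follows essentially the same route as the paper: prove the contrapositive, use Lemma~\ref{Fsigma} to meet the hypotheses of Ramsay's dichotomy and obtain a non-trivially ergodic quasi-invariant measure $\mu$, then apply Proposition~\ref{factorrep} to the direct integral $L=\int_{\gus}^{\oplus}l^{u}\,d\mu(u)$ to conclude that $L$ is a factor representation that cannot be type~I. The only cosmetic difference is that you phrase the last step as a contradiction via part~(ii), whereas the paper states directly that $L$ is a non-type-I factor representation; the logic is identical.
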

	
	 \begin{proof}
		We prove the contrapositive. Suppose that $\gus/G$ is not $T_{0}$. 
		By Lemma \ref{Fsigma} the hypotheses of 
		\cite[Theorem 2.1]{Ram90} are satisfied. So there exists
		a non-trivial ergodic measure $\mu$ on $\gus$.  
		By Proposition \ref{factorrep} the direct integral representation 
		$L=\int{l^ud\mu}$ is a non-type I factor representation. Hence 
		$C^*(G)$ is not type I, which concludes the proof.		
	 \end{proof}

Combining Theorem \ref{mainGCRthm} and Clark's Theorem 7.1 in \cite{Cla07} we can formulate a refined characterization 
of GCR groupoids $C^*$-algebras without amenability: 	
	
\begin{thm} \label{mainGCRthm2}
		Let $G$ be a second-countable, locally compact and Hausdorff \\ groupoid
		with a Haar system. Then $C^*(G)$ is 
		GCR if and only if the stability subgroups of $G$ are GCR and $\gus/G$ is $T_{0}$.
\end{thm}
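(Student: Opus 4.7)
The plan is to assemble Theorem \ref{mainGCRthm2} from two ingredients: our Theorem \ref{mainGCRthm} and the two implications in Clark's Theorem 7.1 of \cite{Cla07} whose proofs do not invoke amenability. Since a $C^*$-algebra is GCR if and only if it is type I \cite{Kap51, Gli61, Sak66}, I will switch freely between these properties.

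I would argue the two directions separately. For the forward implication, assume $C^*(G)$ is GCR, hence type I. Theorem \ref{mainGCRthm} then delivers that $\gus/G$ is $T_{0}$, which is exactly the conclusion for which amenability was previously needed. For the GCR-ness of the stabilizer $C^*$-algebras $C^*(\gudu)$, I would quote the corresponding part of Clark's Theorem 7.1: as pointed out in the introduction, amenability enters Clark's argument only through the continuity of the orbit-to-spectrum map $[u]\mapsto [l^u]$, and that continuity is used exclusively to obtain the $T_{0}$ conclusion. The derivation of GCR stabilizers from GCR groupoid $C^*$-algebra proceeds independently and is therefore available to us unchanged.

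For the reverse implication, assume the stability subgroups of $G$ are GCR and $\gus/G$ is $T_{0}$. This matches precisely the hypothesis of the converse implication in Clark's Theorem 7.1, whose proof (induction from stabilizers, Mackey-type analysis of $C^*(G)^{\wedge}$) never uses the amenability standing assumption. Conclude that $C^*(G)$ is GCR.

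The main obstacle is not mathematical but bibliographic: one must inspect Clark's proof of Theorem 7.1 carefully enough to confirm that the amenability hypothesis is confined to the single implication that our Theorem \ref{mainGCRthm} now supplies unconditionally, so that the remaining implications transport to our amenability-free setting verbatim. Once this bookkeeping is done, the present theorem is a one-line corollary of Theorem \ref{mainGCRthm} combined with Clark's result.
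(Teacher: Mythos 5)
Your proposal is correct and matches the paper's own argument: Theorem \ref{mainGCRthm2} is obtained by combining Theorem \ref{mainGCRthm} with the amenability-free portions of Clark's Theorem 7.1, exactly as you describe. The bookkeeping point you raise (that amenability enters Clark's proof only via the continuity of $[u]\mapsto[l^u]$, used solely for the ``GCR implies $T_0$'' implication) is precisely the observation the paper makes in its introduction to justify this assembly.
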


\section*{ACKNOWLEDGEMENTS}
This paper forms part of work done in the author's doctoral thesis. I thank my advisors 
Astrid an Huef and Lisa Orloff Clark for their guidance, support and endless patience   throughout my studies.

\bibliographystyle{acm}

\end{document}